\documentclass{article}
\usepackage{amssymb,amsmath,multirow}
\usepackage[dvips]{graphicx}
\usepackage{epic,eepic}
\usepackage{chemarrow}
\usepackage{tikz}
\usepackage{color}
\newtheorem{thm}{Theorem}
\newtheorem{lem}{Lemma}
\newtheorem{cor}{Corollary}
\newtheorem{exam}{Example}
\def\pn{\par\smallskip\noindent}
\def\proof{\pn {Proof.} }
\def\endproof{\hfill \quad{$\Box$}\smallskip}

\title{Solving the Tikhonov Regularization of the Total Least Squares Problem by matrix simultaneous diagonalization via congruence }

\author{Van-Bong Nguyen\footnotemark[1] , Thi Ngan Nguyen\footnotemark[2] }

\begin{document}
\maketitle
\renewcommand{\thefootnote}{\fnsymbol{footnote}}
\footnotetext[1]{Department of Mathematics, Tay Nguyen
	University, Vietnam({\tt nvbong@ttn.edu.vn})}
\footnotetext[2]{Department of Mathematics, Tay Nguyen
	University, Vietnam({\tt nguyenthingan@ttn.edu.vn})}

\begin{abstract}
We are concerned with solving the Tikhonov regularization of the total least squares (TRTLS)
 which minimizes a sum of a quadratic fractional function and a nonnegative penalty quadratic form. 
  If the penalty is nonsingular, the problem is attained  and has been solved efficiently. However, 
  if the penalty is singular, the problem becomes much more complicated, it may not be attained. 
 In this paper we explore the property of simultaneously diagonalizable via congruence
  of matrices to make a change of variables and transform the problem into one of three tractable formats. Each of which is then 
  thorough analyzed.

\end{abstract}

{\bf Keywords}: Simultaneously diagonalizable via congruence, Trust region subproblem,
 Generalized trust region subproblem, Positive semidefinite interval, Matrix pencil.

\pagestyle{myheadings} \thispagestyle{plain}
\section{Introduction}\label{sec:intro}
When considering  an over-estimated system of linear equations $ Ax\approx b,$
where the coefficient matrix $A\in\Bbb R^{m\times n}$ and the observed vector  $b\in \Bbb R^m$ are both contaminated by noises,
the total least squares (TLS) method \cite{Golub80,Van,Golub96} is applied
to seek a perturbation  $\Delta A \in\Bbb R^{m\times n}$ to the coefficient matrix $A$  and a perturbation  $\Delta b \in\Bbb R^m$
 to the observed vector $b$ such that the total squares $\| \Delta A\|^2_F+\|\Delta b\|^2$ is minimized over the constraints $(A+\Delta A)x=b+\Delta b$ of variables
 $\Delta A, \Delta b, x:$
 \begin{align}\label{TLS}
   \min_{\Delta A, \Delta b,x}~~\Big\{  \| \Delta A\|^2_F+\|\Delta b\|^2 \Big\}~~ \text{   subject to } (A+\Delta A)x=b+\Delta b,
 \end{align}
 where $\|\cdot\|_F$ denotes the Frobenius norm of a matrix and $\|\cdot \|$ is the usual Euclidean norm.  Since the original system  
   is  ill-conditioned, \eqref{TLS} may yield physically meaningless solutions \cite{Tikhonov0,Tikhonov1}. The regularization is thus necessary to stabilize the computed solutions.
 Among many others, the regularized least squares \cite{Gander,Bjorck,Beck06} and the Tikhonov regularization \cite{Tikhonov0,Bjorck,Beck06} are well-established.
 The former approach is to add
 a quadratic constraint to bound the size of the solutions. It leads to solving a quadratic problem with one quadratic
 constraint also referred to  as the trust region subproblem (TRS) \cite{Sorensen,More-Sorensen83,More93,Gould}. The latter one
  adds a quadratic penalty to the objective function  \cite{Gene}:
\begin{align*}
 {\rm (TRTLS)}\hspace*{0.1cm}  \text{min }\Big\{\| \Delta A\|^2_F+\|\Delta b\|^2+\rho\|Lx\|^2\Big\}\text{ subject to\ } (A+\Delta A)x=b+\Delta b,
\end{align*}
where  $L\in\Bbb R^{k\times n}, k\le n,$ is a full row rank Tikhonov regularized matrix and $\rho>0$ is the penalty parameter
chosen to control the size of the solution vector. As indicated in \cite{Bong1}, the Tikhonov regularization problem is a special case of the generalized $p$-regularized subproblems. 

For a fixed $x,$ (TRTLS) is a linearly constrained convex quadratic problem of variables $\Delta A, \Delta b.$ Then the
 necessary  and sufficient optimality condition helps to reformulate it into the following form \eqref{Pro}, please see  \cite{Beck06} 
for more detailed:
\begin{align}\label{Pro}
{\rm  (TRTLS)}  \hspace*{0.7cm}  \min_{x\in\Bbb R^n}\left\{ \dfrac{\|Ax-b\|^2}{\|x\|^2+1}+\rho\|Lx\|^2\right\}.
\end{align}
We emphasize that solving \eqref{Pro}  is still  very difficult since it is nonconvex. 
So far, only the case $k=n,$ i.e., $L$ is a square nonsingular matrix, can be solved efficiently due to its attractive properties. 
Indeed, in that case, the matrix $L^TL$ of the penalty $\|Lx\|^2=x^TL^TLx$ is positive definite: $L^TL\succ0$
so that (TRTLS) can be transformed to the Tikhonov identical (TI) regularized total least squares \cite{Yang}. i.e., the penalty is   
$\rho\|x\|^2.$
Then, by setting $\alpha=\|x\|^2+1,$  (TRTLS)  becomes a minimization problem of 
 a one-dimensional parametric function
\begin{align}\label{original1}
 \hspace*{0.6cm}
\begin{array}{lll}
\mathcal{G}(\alpha):= \min_{\|x\|^2=\alpha-1} \left\{\dfrac{\|Ax-b\|^2}{\alpha}+\rho\|x\|^2\right\}.
\end{array}
\end{align}
 Problem 
 \eqref{original1} is then solved very efficiently by the TRTLSG  Algorithm \cite{Beck06} since 
 $\mathcal{G}(\alpha)$  is strictly unimodal.  On the other hand, 
  due to its special form, (TI) can also be recast as  a single ratio fractional problem
\begin{align}\label{SingleRatio}
  \min_{x\in\Bbb R^n}\dfrac{f(x)}{g(x)},
\end{align}
where $f(x)=\|Ax-b\|^2+\rho\|x\|^4+\rho\|x\|^2$ is a four degree polynomial and $g(x)=\|x\|^2+1$  \cite{Beck06,Yang}.
By the classical Dinkelbach’s method \cite{Dinkelbach}, \eqref{SingleRatio} is equivalent to finding
a  zero point of a univariate function $\pi(t):$
$$\pi(t):=\min_{x\in\Bbb R^n}\{f(x)-t g(x)\}.$$
More importantly, $\pi(t)$ is a strictly decreasing function and its zero point $t^*$ is in the closed interval
$[0,\|b\|^2]$    so that  $t^*$ can be found efficiently by  the  TRTLSI Algorithm \cite{Beck06}.
In a more recent study, Yang et al. \cite{Yang} explore the hidden convexity of the (TI) 
to reformulate it into a  problem of finding a zero point of a strictly decreasing, smooth and convex univariate function over the interval $(-\lambda_{\min}(A^TA),+\infty)$ 
and propose a generalized Newton’s method referred to as the GN algorithm for finding such a unique zero point. 
 
 However, if  $k<n,$ (TRTLS) becomes much more complicated. 
 Only study whether it is attained is already very difficult. So far, only a sufficient optimality condition can be obtained, please see 
 condition (13) in \cite{Beck06}.  Solving (TRTLS) in this case is thus still open and it inspires us in the present study.
 For convenience in presenting, we
 consider (TRTLS) in the following format
 
\begin{align*}
{\rm(P)} \hspace*{0.7cm}\lambda^*:= \min_{x\in\Bbb R^n}\left\{\mathcal{H}(x)=\dfrac{x^TWx-2a^Tx+\alpha}{\|x\|^2+1}+x^TBx\right\},
\end{align*}
where  $W, B\in S^n_+$  are positive semidefinite symmetric matrices, and in particular, $B$ is singular, $a\in\Bbb R^n,$ 
$\alpha\in\Bbb R$  such that $g(x):=x^TWx-2a^Tx+\alpha\ge0 ~\forall x\in\Bbb R^n.$
  We note that  $g(x)\ge0 ~\forall x\in\Bbb R^n$ implying  $a\in{\rm range}(W),$
i.e., $a=Wx_0$ for some $x_0\in\Bbb R^n.$ If $W=A^TA,$  $B=\rho L^TL$ with $\rho>0,$ 
$b=Ax_0$ and  $\alpha=\|b\|^2$ then  (P) 
is reduced to (TRTLS). 

 We will show that there is always a nonsingular matrix $P$ that puts 
  $B$ into the following form
 \begin{align*} 
&  P^TBP={\rm diag}(\alpha_1, \alpha_2, \ldots, \alpha_k,\underbrace{0,\ldots,0}_{n-k\ \text{  zeros }}),
\end{align*}
with $\alpha_i>0$ and simultaneously puts $W$ into one of the following three cases:
\begin{align}
   &  P^TWP={\rm diag}(\beta_1, \beta_2, \ldots, \beta_k,\underbrace{0,\ldots,0}_{n-k\ \text{  zeros }}),\label{In1}\\
  & P^TWP={\rm diag}(\beta_1, \beta_2, \ldots, \beta_k,\gamma_1,\ldots,\gamma_s,\underbrace{0,\ldots,0}_{n-k-s\ \text{  zeros }}),\label{In2}\\
  & P^TWP={\rm diag}(\beta_1, \beta_2, \ldots, \beta_k,\underbrace{\gamma_1,\ldots,\gamma_s}_{s=n-k})\label{In3}
\end{align}
 with $\beta_i\ge0$ and $\gamma_i>0.$ 
 These allow to make a change of variables $x=Py$ to convert (P) to one of three simpler formats, please see
 \eqref{dang1}, \eqref{dang3} and \eqref{dang2} below,
 corresponding to one of the three cases of $P^TBP$ and $P^TWP$ above.
 As we will see, solving \eqref{dang1} and \eqref{dang3} is easy and we can define clearly their optimal solution sets. 
 However, solving \eqref{dang2} is harder. We then need to decompose the matrix pencil into 
 blocks that can help to characterize the attainment of \eqref{dang2} and evaluate its optimal value. 
\section{A simplified reformulation of  (P) by SDC}\label{sec02}
Since $B\succeq0$ and $W\succeq0,$ $B$ and $W$ are SDC \cite[Lemma 5]{Bong}, i.e., there exists a nonsingular matrix $P$ such that $P^TBP$ and $P^TWP$ are diagonal. This suggests to make a change of variables to simplify the formulation of (P). We first propose the following 
important result.

\begin{lem}\label{B1}   With $B$ and $W$ given in (P), there always exists a nonsingular matrix $P$
that puts  $B$ to the following format:
\begin{align}\label{a0}
\bar{B}:= P^TBP={\rm diag}(\alpha_1, \alpha_2, \ldots, \alpha_k,0,\ldots,0),
\end{align}
where $\alpha_i>0, i=1,2,\ldots,k,$
and simultaneously puts $W$   in one of the following three forms:
\begin{align}
\bar{W}:= & P^TWP={\rm diag}(\beta_1, \beta_2, \ldots, \beta_k,0,\ldots,0),\label{a}\\
 \bar{W}:=&  P^TWP={\rm diag}(\beta_1, \beta_2, \ldots, \beta_k,\gamma_1,\ldots,\gamma_s,0,\ldots,0),\label{c}\\
\bar{W}:= &  P^TWP={\rm diag}(\beta_1, \beta_2, \ldots, \beta_k,\gamma_1,\ldots,\gamma_s)\label{b},
\end{align}
such that  $\beta_i\ge0, i=1,2,\ldots,k;$ and $\gamma_i>0, i=1,2,\ldots, s;$ here $k+s<n$ in \eqref{c} but $k+s=n$ in \eqref{b}.
\end{lem}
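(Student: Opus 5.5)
We start from the SDC property of the pair $B,W$ quoted just before the statement (\cite[Lemma 5]{Bong}): since $B\succeq0$ and $W\succeq0$, there is a nonsingular $Q$ with $Q^TBQ$ and $Q^TWQ$ both diagonal. The diagonal entries of $Q^TBQ$ are nonnegative because congruence preserves positive semidefiniteness. By Sylvester's law of inertia exactly $k={\rm rank}(B)$ of them are positive, and $k<n$ because $B$ is singular. The same reasoning shows that the diagonal entries of $Q^TWQ$ are nonnegative.

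Next we reorder coordinates with a permutation matrix $\Pi_1$ and set $P_1=Q\Pi_1$. Congruence by a permutation permutes the diagonals of both matrices simultaneously, so we can move the $k$ indices where $B$ has a positive entry to the front. This gives $P_1^TBP_1={\rm diag}(\alpha_1,\ldots,\alpha_k,0,\ldots,0)$ with $\alpha_i>0$, and $P_1^TWP_1={\rm diag}(\beta_1,\ldots,\beta_k,w_{k+1},\ldots,w_n)$ with $\beta_i\ge0$ and $w_j\ge0$. We then apply a second permutation $\Pi_2$ that fixes the first $k$ coordinates and acts only on indices $k+1,\ldots,n$. It sorts those indices so that the $s$ positive values $w_j$, renamed $\gamma_1,\ldots,\gamma_s$, come first and the zeros come last. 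On these indices $B$ is identically zero, so $\Pi_2$ leaves $\bar B$ unchanged, and $P=Q\Pi_1\Pi_2$ is still nonsingular.

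Finally we split into cases according to $s$, the number of positive entries among $w_{k+1},\ldots,w_n$. If $s=0$ we obtain form \eqref{a}. If $0<s<n-k$ we obtain \eqref{c}, with $k+s<n$. If $s=n-k$ we obtain \eqref{b}, with $k+s=n$. These cases are exhaustive.

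The only step that needs care is confirming that the SDC result applies to two semidefinite matrices that may both be singular, and that the diagonal signs are as claimed. The cited lemma guarantees the first point, and inertia gives the second. Everything after that is bookkeeping with permutations, and the key point there is to use a block-structured permutation in the second step so that $\bar B$ is not disturbed.
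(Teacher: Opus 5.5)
Your proof is correct, but it reaches the lemma by a shorter route than the paper.

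\textbf{The paper's route.} The paper does not start from an arbitrary simultaneous diagonalizer. It first invokes the block canonical form of \cite[Lemma 4]{Ngan} (or \cite[Lemma 3]{Bong}). There $U^TBU={\rm diag}(B_1,0)$, and $U^TWU$ has lower-right block either $0$ or ${\rm diag}(W_3,0)$ with $W_3$ nonsingular diagonal. The only coupling to the $B_1$-block is an off-diagonal block $W_2$ facing the zero part. The paper then uses SDC characterizations (\cite[Lemmas 11, 12]{Ngan}, \cite[Theorem 6]{Jiang}) to conclude that $W_2$ vanishes (or is absent) and that $B_1,W_1$ are SDC. Finally it sets $P=U\,{\rm diag}(V,I_{n-k})$, with $V$ diagonalizing $(B_1,W_1)$.

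\textbf{Your route.} You use only the existence of a simultaneous diagonalizer for two positive semidefinite matrices, which the paper already quotes just before the lemma. You then get the normal form from inertia plus two permutations. The second permutation is confined to the indices where $\bar B$ vanishes, so $\bar B$ is untouched.

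\textbf{What each buys.} Your version needs no block-structure lemma. It also makes explicit two points the paper leaves implicit:
\begin{itemize}
\item the sign conditions $\beta_i\ge0$, $\gamma_i>0$, which follow from $P^TWP\succeq0$;
\item the exhaustiveness of the three cases $s=0$, $0<s<n-k$, $s=n-k$.
\end{itemize}
The paper's version, in exchange, ties the three forms to the block shape of $U^TWU$ produced by the constructive SDC procedure of \cite{Ngan}.

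In both approaches $s$ equals ${\rm rank}(Z^TWZ)$ for any matrix $Z$ whose columns form a basis of $\ker B$. In your construction the last $n-k$ columns of $P$ are such a basis. So which of the three forms occurs does not depend on the chosen diagonalizer.
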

\proof 
Since $B$ is singular, there always exists a nonsingular matrix $U$  such that
\begin{align}\label{1}
 U^TBU=\left(\begin{matrix}B_1&0_{k\times(n-k)}\\0_{(n-k)\times k}&0_{(n-k)\times (n-k)} \end{matrix}\right)
\end{align}
and $U^TWU$ takes one of the following forms:
\begin{align}\label{2}
 U^TWU=\left(\begin{matrix}W_1&W_2\\ W_2^T&0_{(n-k)\times(n-k)} \end{matrix}\right), 
\end{align}
\begin{align}\label{3}
 U^TWU=\left(\begin{matrix}W_1&0_{k\times s}&W_2\\0_{s\times k}&W_3&0_{s\times(n-k-s)}\\ W_2^T&0_{(n-k-s)\times s}&0_{(n-k-s)\times(n-k-s)} \end{matrix}\right),
\end{align}
where $B_1$ is a $k\times k$ nonsingular diagonal matrix, $W_1$ is a $k\times k$ symmetric matrix, and  $W_2$
is a $k\times(n-k)$ matrix in \eqref{2} but in \eqref{3} it 
 either is a  $k\times(n-k-s)$ matrix or does not exist, $W_3$ is an
$s\times s$ nonsingular diagonal matrix. Please see   \cite[Lemma 4]{Ngan} or \cite[Lemma 3]{Bong} for more detailed.
Here $0_{p \times q}$ denotes the $p\times q$ zero matrix.  
 
\begin{enumerate}
  \item If $B$ and $W$ are decomposed into \eqref{1} and \eqref{2}, respectively, then since $B$ and $W$ are SDC, $W_2$ must be zero: $W_2=0_{k\times(n-k)}$ 
  and $B_1, W_1$ are SDC, please see  \cite[Lemma 11]{Ngan}. So
  $U^TBU$ and $U^TWU$ are of the forms:
  $$U^TBU={\rm diag}(B_1,0_{(n-k)\times(n-k)}),~  U^TWU={\rm diag }(W_1, 0_{(n-k)\times(n-k)})$$
   and there is a nonsingular matrix $V$  such that
 $$V^TB_1V={\rm diag}(\alpha_1, \alpha_2, \ldots, \alpha_k), V^TW_1V={\rm diag}(\beta_1, \beta_2, \ldots, \beta_k).$$
 Let $Q={\rm diag}(V, I_{n-k})$ and $P=UQ,$ we then have
$$
   P^TBP=Q^TU^TBUQ={\rm diag}(\alpha_1, \alpha_2, \ldots, \alpha_k,0,\ldots,0),$$
 $$  P^TWP=Q^TU^TWUQ={\rm diag}(\beta_1, \beta_2, \ldots, \beta_k,0,\ldots,0)$$
  as desired \eqref{a0} and \eqref{a} with  $\alpha_i>0$  and
 $\beta_i\ge0$ for $ i=1,2,\ldots,k.$
  \item If $B$ and $W$ are decomposed into
\eqref{1} and \eqref{3}, respectively, then, again since $B$ and $W$ are SDC,  $W_2$ either is zero or does not exist and $B_1, W_1$ are SDC, please see \cite[Theorem 6]{Jiang}
or \cite[Lemma 12]{Ngan}. 
 Let $V$ be a nonsingular matrix such that
$$V^TB_1V={\rm diag}(\alpha_1, \alpha_2, \ldots, \alpha_k), V^TW_1V={\rm diag}(\beta_1, \beta_2, \ldots, \beta_k)$$
and set $Q={\rm diag}(V,  I_{n-k}), P=UQ.$ We then have the following subcases.
\begin{itemize}
  \item If $W_2=0,$ then 
  \begin{align*} 
&U^TBU={\rm diag}(B_1, 0_{s\times s}, 0_{(n-k-s)\times(n-k-s)}), \\
&U^TWU={\rm diag}(W_1, W_3,0_{(n-k-s)\times(n-k-s)} )
\end{align*}
  and
\begin{align*}
  &\  P^TBP=Q^TU^TBUQ={\rm diag}(\alpha_1, \alpha_2, \ldots, \alpha_k,0,\ldots,0, 0, \ldots,0), \\
   &  P^TWP=Q^TU^TWUQ={\rm diag}(\beta_1, \beta_2, \ldots, \beta_k,\gamma_1,\ldots,\gamma_s,0,\ldots,0).
\end{align*} 
 These prove \eqref{a0} and \eqref{c}.
  \item  If  $W_2$  does not exist then
\begin{align}\label{5}
U^TBU={\rm diag}(B_1, 0_{(n-k)\times (n-k)}), ~ U^TWU={\rm diag}(W_1, W_3)
\end{align}
and 
\begin{align*}
  &\  P^TBP=Q^TU^TBUQ={\rm diag}(\alpha_1, \alpha_2, \ldots, \alpha_k,0,\ldots,0), \\
   &  P^TWP=Q^TU^TWUQ={\rm diag}(\beta_1, \beta_2, \ldots, \beta_k,\gamma_1,\ldots,\gamma_s).
\end{align*} 
These are
 \eqref{a0} and \eqref{b} as desired.
\end{itemize}
\end{enumerate}

\endproof

Lemma \ref{B1} suggests a change of variables $x=Py$ such that
\begin{align*}
  x^TWx-2a^Tx+\alpha&=y^T(P^TWP)y-2(P^Ta)^Ty+\alpha=y^T\bar{W}y-2\bar{a}^Ty+\alpha, \\
    x^TBx&=y^T(P^TBP)y=y^T\bar{B}y, \\
    \|x\|^2&=x^Tx=y^T(P^TP)y=y^TMy,
\end{align*} 
  here $\bar{a}=P^Ta,$ $M=P^TP\succ0.$ Now, depending on each case of
$\bar{B}$ and $\bar{W},$ (P) is  reformulated as follows.
\begin{enumerate}
  \item If $\bar{B}$ and $\bar{W}$ take the forms \eqref{a0} and \eqref{a}, respectively, then we set  $A_1={\rm diag}(\beta_1,\beta_2,\ldots,\beta_k), D_1={\rm diag}(\alpha_1,\alpha_2,\ldots,\alpha_k)$ and separate $y$ into two parts:
  $$y=\left(\begin{matrix}u\\ v\end{matrix}\right):=u\oplus v,$$
  with $u\in\Bbb R^k, v\in\Bbb R^{n-k}$ such that  
  $$g(Py)=y^T\bar{W}y+2\bar{a}^Ty+\alpha= u^TA_1u-2p^Tu-2q^Tv+\alpha,$$ 
   where $\bar{a}= p\oplus q,$   $p\in\Bbb R^{k}, q\in\Bbb R^{n-k}.$
   Since $g(x)=g(Py)\ge0~\forall y\in\Bbb R^n,$ we have  
   $\bar{a}\in {\rm range}(\bar{W})$ and thus $q=0.$  Then
(P)  can be written in the following concise form, where the numerator of the quadratic fraction contains no $v:$
  \begin{align}\label{dang1}
\lambda^*=\min_{u\oplus v \in\Bbb R^{k}\times \Bbb R^{n-k}}
\left\{\mathcal{H}(u,v):=\dfrac{u^TA_1u-2p^Tu+\alpha}{(u\oplus v)^TM(u\oplus v)+1}+u^TD_1u\right\}.
\end{align}
\item If $\bar{B}$ and $\bar{W}$ take the forms \eqref{a0} and \eqref{c}, respectively, then we set $A_1, D_1$ as above, 
  $W_3={\rm diag}(\gamma_1,\gamma_2,\ldots,\gamma_s)$ and separate
$y=u\oplus  v\oplus \omega,$ where $u\in\Bbb R^k, v\in\Bbb R^s, \omega\in\Bbb R^{n-k-s}.$ Let
 $\bar{a}=p\oplus q\oplus d,$  
  with  $p\in\Bbb R^k, q\in\Bbb R^s, d\in\Bbb R^{n-k-s}$ and write $g(x)$ as follows
  $$g(x)=g(Py)=y^T\bar{W}y+2\bar{a}^Ty+\alpha= u^TA_1u-2p^Tu+v^TW_3v-2q^Tv-2d^T\omega+\alpha.$$ 
   Again,  $g(x)\ge0~\forall x\in\Bbb R^n$ implies $d=0.$ Then  (P) takes the following form:
\begin{align}\label{dang3}
 \lambda^*= \min\left\{\mathcal{H}(u,v,\omega)=
\dfrac{u^TA_1u-2p^Tu+v^TW_3v-2q^Tv+\alpha}{(u\oplus v\oplus\omega)^TM(u\oplus v\oplus\omega)+1}+u^TD_1u\right\},
\end{align}
with ${u\oplus v\oplus\omega \in\Bbb R^{k}\times \Bbb R^s\times\Bbb R^{n-k-s}}.$
\item If $B$ and $W$ are put to \eqref{a0} and \eqref{b}, respectively, we then set $y= u\oplus v,$
 $\bar{a}=p\oplus q,$ where $u,p\in\Bbb R^k, v, q\in\Bbb R^{n-k}$ and rewrite (P) as follows:
  \begin{align}\label{dang2}
 \lambda^*= \min\left\{\mathcal{H}(u,v)=\dfrac{u^TA_1u-2p^Tu+v^TW_3v-2q^Tv+\alpha}{(u\oplus v)^TM(u\oplus v)+1}+u^TD_1u\right\},
\end{align}
with $ u\oplus v \in\Bbb R^{k}\times \Bbb R^{n-k}.$ In this case, $s=n-k.$  
\end{enumerate}

 \section{Solving (P)}
 
 We are now ready to solve (P) which has been reformulated into one of three cases \eqref{dang1}, \eqref{dang3} and \eqref{dang2}.  
 Firstly, \eqref{dang1} is completely characterized as follows.
\begin{thm}\label{Thm1}
Problem \eqref{dang1} is attained if and only if  $\alpha=0.$ If it is attained, its optimal solutions are then $\{x=0\oplus v: ~v\in\Bbb R^{n-k}\}.$
\end{thm}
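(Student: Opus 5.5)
The plan is to show directly that the optimal value of \eqref{dang1} is $\lambda^*=0$, and then to identify exactly which points achieve it. Everything rests on nonnegativity: the numerator $g(Py)=u^TA_1u-2p^Tu+\alpha$ is nonnegative for all $u$ by the standing assumption $g\ge0$, and the denominator $(u\oplus v)^TM(u\oplus v)+1\ge1>0$ since $M=P^TP\succ0$. Together with $D_1\succ0$, this gives $\mathcal{H}(u,v)\ge0$ everywhere, so $\lambda^*\ge0$. Evaluating at $u=0$ also gives $\alpha=g(P(0\oplus v))\ge0$.

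Next I will show that the infimum equals $0$. Fix $u=0$ and write $M$ in block form conformal to $u\oplus v$. Then
\[
\mathcal{H}(0,v)=\frac{\alpha}{v^TM_{22}v+1},
\]
where $M_{22}$ is the trailing $(n-k)\times(n-k)$ principal block of $M$, so $M_{22}\succ0$. Taking $v=t\bar v$ with $\bar v\ne0$ and letting $t\to\infty$ sends the denominator to $+\infty$, hence $\mathcal{H}(0,t\bar v)\to0$. Therefore $\lambda^*=0$. Here I use $k<n$, i.e.\ $B$ singular, so the $v$-block is nonempty.

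Since $\lambda^*=0$, the problem is attained if and only if some point satisfies $\mathcal{H}(u,v)=0$. Both terms of $\mathcal{H}$ are nonnegative, so both must vanish at such a point. The condition $u^TD_1u=0$ with $D_1={\rm diag}(\alpha_1,\ldots,\alpha_k)\succ0$ forces $u=0$. The first term then becomes $\alpha/(v^TM_{22}v+1)$, which is zero exactly when $\alpha=0$.

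This gives both directions. If $\alpha\neq0$, no point attains the value $0$, so \eqref{dang1} is not attained. If $\alpha=0$, every point $0\oplus v$ gives $\mathcal{H}=0$, and the argument above shows these are the only minimizers. The optimal solution set in the $y$-variables is therefore $\{0\oplus v: v\in\Bbb R^{n-k}\}$; in the original variables it is $x=P(0\oplus v)$. No step is a real obstacle. The only care needed is to record that $M_{22}\succ0$ and that $\alpha\ge0$ follows from $g\ge0$.
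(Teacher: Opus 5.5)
Your proof is correct, and it takes a more direct route than the paper. You first compute the infimum unconditionally. Both summands of $\mathcal{H}$ are nonnegative, and along the ray $0\oplus t\bar v$ the value $\alpha/(t^2\bar v^TM_{22}\bar v+1)$ tends to $0$, so $\lambda^*=0$. Attainment then reduces to solving $\mathcal{H}(u,v)=0$, which forces $u=0$ and then $\alpha=0$.

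The paper instead assumes attainment at $u^*\oplus v^*$ and argues by contradiction in stages, using the auxiliary problems \eqref{Pro1} (minimize $u^TD_1u$ over the zero set of the numerator) and \eqref{Pro2}:
\begin{enumerate}
\item The numerator vanishes at $u^*$, since otherwise letting $\|v\|\to\infty$ strictly lowers the objective.
\item ${u^*}^TD_1u^*=\gamma^*$.
\item $\gamma^*=\eta^*=0$, by a second $\|v\|\to\infty$ argument, which gives $u^*=0$ and $\alpha=0$.
\end{enumerate}
The underlying mechanism is the same in both proofs: the $v$-block inflates the denominator while the numerator does not depend on $v$. You use it once, up front, rather than repeatedly inside a contradiction.

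Your version buys several things. It is shorter. It gives $\lambda^*=0$ even when the problem is not attained, which the paper only obtains under the attainment hypothesis. It avoids bookkeeping about \eqref{Pro1}, such as whether its feasible set is nonempty. It also carries over almost verbatim to Theorem~\ref{Thm2}, with $\omega$ playing the role of $v$.

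Your remark that the minimizers in the original variables are $x=P(0\oplus v)$, not $0\oplus v$, is a correct sharpening of the statement, which conflates $x$ with $y$.
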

\proof
Observe simply that if $\alpha=0$ then (P) is attained at the optimal solutions
$x= 0\oplus v$ for arbitrary $v\in\Bbb R^{n-k}$ with  optimal value zero.
Now suppose (P) is attained, we need  to show that $\alpha=0.$
Let us consider the following two problems:
   \begin{align}\label{Pro1}
	\begin{array}{llll}
		\gamma^*=&\min & u^TD_1u& \\
		&{\rm s.t.}& h(u):= u^TA_1u+2p^Tu+\alpha=0
	\end{array}
\end{align}
and
\begin{align}\label{Pro2}
\eta^*=\min_{u\in\Bbb R^k} u^TD_1u.
\end{align}
We have $\eta^*=0,$ and $\gamma^*\ge \eta^*.$
Suppose  $y^*= u^*\oplus v^* $ is an optimal solution of \eqref{dang1}. Then it must hold that $h(u^*)=0.$
 Indeed, if  $h(u^*)=\beta^*>0$ then
 \begin{align}\label{ct1}
   \lambda^*=&\mathcal{H}(u^*,v^*)=\dfrac{{u^*}^TA_1u^*-2p^Tu^*+\alpha}{(u^*\oplus v^*)^TM(u^*\oplus v^*)+1}+{u^*}^TD_1u^*\\
   =& \dfrac{\beta^*}{(u^*\oplus v^*)^TM(u^*\oplus v^*)+1}+{u^*}^TD_1u^*.\nonumber
 \end{align}
We write $M=\left(\begin{matrix}
                    M_1 & M_2 \\
                    M_2^T & M_3
                  \end{matrix}\right)$
such that $M_1$ is a $k\times k$ matrix, $M_2$ is a $k\times (n-k)$ matrix and $M_3$ is a $(n-k)\times(n-k)$ matrix.
Since $M\succ0,$ we have $M_3\succ0.$
Then
\begin{align*}
  \lambda^*=&\dfrac{\beta^*}{(u^*\oplus v^*)^TM(u^*\oplus v^*)+1}+{u^*}^TD_1u^*\\
  =&\dfrac{\beta^*}{{v^*}^TM_3v^*+2(M_2^Tu^*)^Tv^*+{u^*}^TM_1u^*+1}+{u^*}^TD_1u^*\\
  \le&\dfrac{\beta^*}{v^TM_3v+2(M_2^Tu^*)^Tv+{u^*}^TM_1u^*+1}+{u^*}^TD_1u^* \underbrace{  \longrightarrow }_{\text{ as } \|v\|\rightarrow\infty}{u^*}^TD_1u^*<\lambda^*.
\end{align*}
This contradiction shows that $h(u^*)=\beta^*=0.$ It also indicates that $u^*$ must be feasible to \eqref{Pro1} and thus $ {u^*}^TD_1u^*\ge\gamma^*.$
If  ${u^*}^TD_1u^*>\gamma^*$ then  there must be a feasible solution $\bar{u}$ of \eqref{Pro1} such that
$\bar{u}^TD_1\bar{u}<{u^*}^TD_1u^*.$ Then $\mathcal{H}(\bar{u},v^*)=\bar{u}^TD_1\bar{u}<{u^*}^TD_1u^*=\mathcal{H}(u^*,v^*).$ 
This is again a contradiction. So it must hold that ${u^*}^TD_1u^*=\gamma^*.$ As a result, we have
$\lambda^*={u^*}^TD_1u^*=\gamma^*.$ Finally, we need to show that $\gamma^*=\eta^*=0.$
Indeed, if  $\gamma^*>\eta^*$ then there must exists an optimal solution $u'$ of \eqref{Pro2} such that $u'$ is not feasible to
\eqref{Pro1}. That is, $h(u')={u'}^TA_1u'-2p^Tu'+\alpha>0$  and
$\gamma^*>{u'}^TD_1u'.$ Then, since $M_3\succ0,$ we have
$$\mathcal{H}(u',v)=\dfrac{{u'}^TA_1u'-2p^Tu'+\alpha}{v^TM_3v+2(M_2^Tu')^Tv+{u'}^TM_1u'+1}+{u'}^TD_1u'\longrightarrow {u'}^TD_1u'<\gamma^*=\lambda^*$$
as $\|v\|\rightarrow \infty.$  This is again a contradiction and it shows that  $\gamma^*=\eta^*.$ Since $\eta^*=0,$  
we have $\gamma^*=\lambda^*=0$ and the optimal solutions then must be $(0,v)$
and $\alpha=0.$
\endproof

\begin{thm}\label{Thm2}
Problem \eqref{dang3} is attained, say at $u^*\oplus v^*\oplus \omega^*,$ if and only if  $u^*=0,$  ${v^*}^TW_3v^*-2q^Tv^* +\alpha=0$
and $\omega^*\in\Bbb R^{n-k-s}$ is arbitrary.

\end{thm}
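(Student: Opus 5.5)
The plan is to argue exactly as in the proof of Theorem \ref{Thm1}, using the $\omega$-block as the direction along which the denominator can be driven to infinity without changing the numerator or the penalty. First I record that $\mathcal{H}(u,v,\omega)\ge 0$ everywhere. The numerator equals $g(Py)\ge0$ and the denominator is positive since $M\succ0$. Moreover $u^TD_1u\ge0$ because $D_1={\rm diag}(\alpha_1,\ldots,\alpha_k)\succ0$. Hence $\lambda^*\ge0$.

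For the ``if'' direction, take $u^*=0$ and $v^*$ with ${v^*}^TW_3v^*-2q^Tv^*+\alpha=0$. Then for every $\omega\in\Bbb R^{n-k-s}$ the numerator at $0\oplus v^*\oplus\omega$ vanishes and the penalty $u^TD_1u$ vanishes. So $\mathcal{H}(0,v^*,\omega)=0\le\lambda^*$, and every such point is optimal; in particular \eqref{dang3} is attained and $\omega^*$ is arbitrary.

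For the ``only if'' direction, suppose $y^*=u^*\oplus v^*\oplus\omega^*$ is optimal. I partition $M$ conformally with $y=(u\oplus v)\oplus\omega$, writing the trailing $(n-k-s)\times(n-k-s)$ block as $M_3\succ0$. Exactly as in the proof of Theorem \ref{Thm1}, I will show two facts, and the first is the key step.

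\textbf{Step 1: the numerator vanishes at $(u^*,v^*)$.} Suppose instead that the numerator equals $\beta^*>0$. Then $\lambda^*=\mathcal{H}(u^*,v^*,\omega^*)>{u^*}^TD_1u^*$. Fix $u^*,v^*$ and let $\|\omega\|\to\infty$. The denominator grows like $\omega^TM_3\omega\to\infty$, while the numerator stays $\beta^*$. Hence $\mathcal{H}(u^*,v^*,\omega)\to {u^*}^TD_1u^*<\lambda^*$, which contradicts optimality. Therefore the numerator is $0$, and $\lambda^*={u^*}^TD_1u^*$.

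\textbf{Step 2: $\lambda^*=0$, which forces $u^*=0$.} Take $u=0$ and any fixed $v$, and let $\|\omega\|\to\infty$. Then $\mathcal{H}(0,v,\omega)\to 0$, so $\lambda^*\le0$ and hence $\lambda^*=0$. Consequently ${u^*}^TD_1u^*=0$, and $D_1\succ0$ gives $u^*=0$. Substituting $u^*=0$ into the vanishing numerator from Step 1 gives ${v^*}^TW_3v^*-2q^Tv^*+\alpha=0$.

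Finally, the ``if'' computation shows that the value $0$ is attained at $0\oplus v^*\oplus\omega$ for every $\omega$, which yields the arbitrariness of $\omega^*$. The only delicate point is the limiting argument in Step 1, namely that the denominator is unbounded along $\omega$. This follows from $M_3\succ0$: the cross terms are linear in $\omega$, so the quadratic term $\omega^TM_3\omega$ dominates them.
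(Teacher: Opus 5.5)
Your proof is correct and takes essentially the same approach as the paper. Both arguments send $\|\omega\|\to\infty$ to force the numerator to vanish at $(u^*,v^*)$, and both use the limit $\mathcal{H}(0,\bar v,\omega)\to 0$ to get $\lambda^*=0$. Your Step 2 is simply a more direct version of the paper's detour through the auxiliary problem \eqref{Pro3} and its contradiction on $\gamma^*>0$. Because Step 2 does not use attainment, it also makes Step 1 redundant: $\mathcal{H}(y^*)=0$ is a sum of two nonnegative terms, which already gives $u^*=0$ and ${v^*}^TW_3v^*-2q^Tv^*+\alpha=0$.
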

\proof
Observe first that if $u^*=0$ and ${v^*}^TW_3v^*-2q^Tv^* +\alpha=0$
then $u^*\oplus v^*\oplus \omega$ is optimal to (P) for all $\omega \in\Bbb R^{n-k-s}$ and the optimal value of (P) is then zero.
For the converse, suppose  \eqref{dang3} is attained at $u^*\oplus v^*\oplus \omega^*.$ 
Let $h(u,v)= u^TA_1u-2p^Tu+v^TW_3v-2q^Tv+\alpha$ and we will show that 
$h(u^*,v^*)=0.$
We consider the following two problems
 \begin{align}\label{Pro3}
	\begin{array}{llll}
		\gamma^*=&\min & u^TD_1u& \\
		&{\rm s.t.}& h(u,v)= u^TA_1u-2p^Tu+v^TW_3v-2q^Tv+\alpha=0
	\end{array}
\end{align}
and
\begin{align}\label{Pro4}
\eta^*=\min_{u\in\Bbb R^k} u^TD_1u.
\end{align}
We first have $\gamma^*\ge \eta^*=0.$ Now, suppose on contrary that  $h(u^*, v^*)=\beta^*>0.$  Then
\begin{align*} 
 \lambda^*=  \mathcal{H}(u^*,v^*,\omega^*)=
\dfrac{\beta^*}{(u^*\oplus v^*\oplus\omega^*)^TM(u^*\oplus v^*\oplus\omega^*)+1}+{u^*}^TD_1u^*.
\end{align*}
We divide $M$ into four parts:
$M=\left(\begin{matrix}
                    M_1 & M_2 \\
                    M_2^T & M_3
                  \end{matrix}\right)$
such that $M_1$ is a $(k+s)\times (k+s)$ matrix, $M_2$ is a $(k+s)\times (n-k-s)$ matrix and $M_3$ is a $(n-k-s)\times(n-k-s)$ matrix. 
Since  $M\succ0,$ we have $M_1\succ0$ and $M_3\succ0.$
Then  
\begin{align*}
  \lambda^* &= \dfrac{\beta^*}
  {{\omega^*}^TM_3\omega^*+2 (u^* \oplus v^*)^TM_2 \omega^*+(u^*\oplus v^*)^T M_1(u^* \oplus v^*)+1}+{u^* }^TD_1u^*\\
   & \le \dfrac{\beta^*}
  {\omega^TM_3\omega+2 (u^* \oplus v^*)^TM_2 \omega+(u^*\oplus v^*)^T M_1(u^* \oplus v^*)+1}+{u^* }^TD_1u^* \\
 & \rightarrow {u^* }^TD_1u^*<\lambda^*
\end{align*} 
  as $\|\omega\|\rightarrow\infty.$  This is a contradiction and it requires $h(u^*, v^*)=\beta^*=0.$   
   This also indicates that $u^*\oplus v^*$ is feasible to
  \eqref{Pro3} for any optimal solution  $u^*\oplus v^*\oplus \omega^*$ of (P). That implies
  $\lambda^*=\gamma^*.$ Finally, we will show  $\gamma^*=0.$ We observe that $\gamma^*\ge0.$
  Suppose $\gamma^*>0.$ Then $0\oplus v $ is not feasible to \eqref{Pro3} for any $v\in\Bbb R^s,$ i.e., $h(0,v)>0$
  for all $v\in\Bbb R^s,$ since, if $h(0,v)=0$ for some $v\in\Bbb R^s,$ then $\gamma^*=0.$ Fix a $\bar{v}\in\Bbb R^s$ and let   $\|\omega\|\rightarrow\infty$ we will have
$$\mathcal{H}(0,\bar{v}, \omega)=\dfrac{\bar{v}^TW_3\bar{v}-2q^T\bar{v}+\alpha}{\omega^TM_3\omega+2(0\oplus \bar{v})^TM_2\omega+(0\oplus\bar{v})^T M_1( 0 \oplus \bar{v})+1} \longrightarrow0<\gamma^*=\lambda^*.$$
     This is a contradiction. So it must hold that $\gamma^*=\lambda^*=0,$
   and   ${v^*}^TW_3v^*-2q^Tv^* +\alpha=0.$

\endproof

Finally, we deal with  the  problem \eqref{dang2}. As we will see, solving \eqref{dang2} is more complicated.  
Let us divide $M$ into four parts as follows
 $$M=\left(\begin{matrix}M_1&M_2\\M_2^T&M_3 \end{matrix}\right),$$
 where  $M_1, M_2, M_3$ are the  $k\times k, k\times s$ and $s\times s$   matrices, respectively. 
 The problem \eqref{dang2} is now rewritten as  
 $$\lambda^*=\inf_{u\oplus v\in\Bbb R^k \times\Bbb R^{n-k}} 
 \dfrac{u^TA_1u -2p^Tu +v^TW_3v-2q^Tv+\alpha}{u^TM_1u+2(M_2^Tu)^Tv+v^TM_3v+1}+u^TD_1u.$$
 We have
 $$\mathcal{H}(u,v)=\dfrac{u^TA_1u -2p^Tu +v^TW_3v-2q^Tv+\alpha}{u^TM_1u+2(M_2^Tu)^Tv+v^TM_3v+1}+u^TD_1u\ge u^TD_1u\ge \lambda_{\min}(D_1)\|u\|^2, $$
so $\mathcal{H}(u,v)\rightarrow\infty$ as $\|u\|\rightarrow\infty.$ This indicates that for any sequence 
$(u^j,v^j)\subset \Bbb R^k \times\Bbb R^{n-k}$ such that
 $\mathcal{H}(u^j,v^j)\rightarrow \lambda^*,$ $(u^j)$ must be bounded.  
  To consider the case  $\|v\|\rightarrow \infty$ we denote by $\lambda=\lambda_{\min}(W_3,M_3)$ 
  the smallest generalized eigenvalue of the pair $(W_3,M_3).$ Then 
   for any fixed $u$ and $v\ne0,$ $\dfrac{v}{\|v\|}$ is bounded as $\|v\|\rightarrow\infty,$ there is a subsequence 
   $(v^j)$ such that  $\dfrac{v^j}{\|v^j\|}\rightarrow \omega.$
As a result,
 $$\dfrac{u^TA_1u -2p^Tu +v^TW_3v-2q^Tv+\alpha}{u^TM_1u+2(M_2^Tu)^Tv+v^TM_3v+1} \rightarrow
 \dfrac{ \omega^TW_3\omega}{ \omega^TM_3\omega}\ge \lambda_{\min}(W_3,M_3)=\lambda$$
 as ${\|v\|\rightarrow\infty}.$
 Since $u^TD_1u\ge0,$ we have
 $$\mathcal{H}(u,v)=\dfrac{u^TA_1u -2p^Tu +v^TW_3v-2q^Tv+\alpha}{u^TM_1u+2(M_2^Tu)^Tv+v^TM_3v+1}+u^TD_1u\ge \lambda.$$
Let $u=0, v=tz,$ where $z$ is the generalized eigenvector corresponding to $\lambda_{\min}(W_3,M_3),$
we have $\mathcal{H}(0,tz)\rightarrow \lambda$ as $t\rightarrow\infty.$ This obviously shows that
$$\lambda^*=\inf \mathcal{H}(u,v)\le \lambda=\lambda_{\min}(W_3,M_3).$$
\begin{lem}\label{L1}
  If $\lambda^*<\lambda$ then $\lambda^*$ is  attained. 
\end{lem}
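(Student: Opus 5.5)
Take a minimizing sequence $(u^j,v^j)$ with $\mathcal{H}(u^j,v^j)\to\lambda^*$, show it is bounded, extract a convergent subsequence, and conclude by continuity of $\mathcal{H}$. Continuity is clear, since the denominator $(u\oplus v)^TM(u\oplus v)+1\ge 1$ because $M\succ0$. The limit point $(\bar u,\bar v)$ then satisfies $\mathcal{H}(\bar u,\bar v)=\lambda^*$, so the infimum is attained. All the work is therefore in proving boundedness.

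From the discussion preceding the lemma, $\mathcal{H}(u,v)\ge \lambda_{\min}(D_1)\|u\|^2$ with $D_1\succ0$. Hence $(u^j)$ is bounded along any minimizing sequence, and after passing to a subsequence we may assume $u^j\to\bar u$. It remains to exclude $\|v^j\|\to\infty$. Suppose, for contradiction, that along a further subsequence $\|v^j\|\to\infty$ and $v^j/\|v^j\|\to\omega$ with $\|\omega\|=1$.

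Divide numerator and denominator of the fractional part by $\|v^j\|^2$. Because $u^j$ stays bounded, the numerator becomes
$$\frac{{u^j}^TA_1u^j-2p^Tu^j-2q^Tv^j+\alpha}{\|v^j\|^2}+\frac{{v^j}^TW_3v^j}{\|v^j\|^2}\longrightarrow \omega^TW_3\omega,$$
and likewise the denominator tends to $\omega^TM_3\omega>0$, using $M_3\succ0$ as a principal submatrix of $M\succ0$. The cross term $2(M_2^Tu^j)^Tv^j/\|v^j\|^2$ tends to $0$, since $u^j$ is bounded and the term is $O(1/\|v^j\|)$. Therefore the fraction converges to $\omega^TW_3\omega/\omega^TM_3\omega\ge\lambda_{\min}(W_3,M_3)=\lambda$. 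Since ${u^j}^TD_1u^j\ge0$, we get $\liminf_j\mathcal{H}(u^j,v^j)\ge\lambda>\lambda^*$, which contradicts $\mathcal{H}(u^j,v^j)\to\lambda^*$. So $(v^j)$ is bounded.

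The only delicate point is the uniformity of this limit. The earlier text treats a fixed $u$, whereas here $u^j$ varies with $j$. This is handled by the boundedness of $(u^j)$ established first: every $u$-dependent term in both numerator and denominator is at most linear in $\|v^j\|$ with bounded coefficients, and so vanishes after division by $\|v^j\|^2$. With both sequences bounded, a subsequence converges to some $(\bar u,\bar v)$, and continuity of $\mathcal{H}$ gives $\mathcal{H}(\bar u,\bar v)=\lambda^*$. Hence $\lambda^*$ is attained.
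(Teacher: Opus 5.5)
Your proposal is correct and follows essentially the same route as the paper. Both arguments bound $u^j$ via $D_1\succ0$, rule out $\|v^j\|\to\infty$ because the normalized fractional part tends to $\omega^TW_3\omega/\omega^TM_3\omega\ge\lambda>\lambda^*$, and then conclude by extracting a convergent subsequence and using continuity of $\mathcal{H}$. Your explicit treatment of the varying $u^j$, where the $u$-dependent terms vanish after division by $\|v^j\|^2$ because $(u^j)$ is bounded, is a point the paper's proof passes over without comment.
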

\proof 
Let $(u^j, v^j)$ be a sequence such that $\mathcal{H}(u^j,v^j)\rightarrow \inf \mathcal{H}(u,v)=\lambda^*.$ We will show that
the sequence $(u^j, v^j)$ must be bounded. As seen, $(u^j)$ is bounded. Suppose
$(v^j)$ is unbounded: $\|v^j\|\rightarrow\infty$ as $j\rightarrow\infty.$ Then
\begin{align*}
\mathcal{H}(u^j,v^j)&=\dfrac{\left(\frac{u^j}{\|v\|}\right)^TA_1\frac{u^j}{\|v\|}
 -2p^T\frac{u^j}{\|v\|^2} +\left(\frac{v^j}{\|v\|}\right)^TW_3\frac{v^j}{\|v\|}-2q^T\frac{v^j}{\|v\|^2}+\frac{\alpha}{\|v\|^2}}
 {\left(\frac{u^j}{\|v\|}\right)^TM_1 \frac{u^j}{\|v\|} +2(M_2^T\frac{u^j}{\|v\|})^T\frac{v^j}{\|v\|}+\left(\frac{v^j}{\|v\|}\right)^TM_3\frac{v^j}{\|v\|}+\frac{1}{\|v\|^2}}\\
 & \rightarrow 
 \dfrac{ \omega^TW_3\omega}{\omega^TM_3\omega}  
    \ge \lambda_{\min}(W_3,M_3)=\lambda.
 \end{align*}
 This contradicts to our assumption that $\lambda^*<\lambda.$ So the sequence $(u^j, v^j)$
 must be bounded. Then there is a subsequence $(u^{j_i}, v^{j_i})$ converging to $(u^*, v^*).$ Since 
 $\mathcal{H}(u,v)$ is continuous, it holds that 
 $\mathcal{H}(u^*,v^*)=\lambda^*.$ The problem is thus attained.
 
 \endproof 
 
 However, checking whether $\lambda^*<\lambda$ is another issue.  
 In the following we propose an alternative result which is checkable and characterizes the attainment of the problem. 
 Let $\bar{H}=\left(\begin{matrix}
                          W_3&-q\\
                          -q^T&\alpha
                        \end{matrix}\right), \bar{M}=\left(\begin{matrix}M_3&0\\0&1\end{matrix}\right).$
Then  $W_3$ and $M_3$ are principle submatrices  of $\bar{H}$ and $\bar{M},$ respectively, it always holds that
$\lambda_{\min}(\bar{H},\bar{M})\le\lambda_{\min}(W_3,M_3).$ 
 
 \begin{thm}\label{mainthm1}
  If $\lambda_{\min}(\bar{H},\bar{M})<\lambda_{\min}(W_3,M_3)$ then 
  $\lambda^*\le\lambda_{\min}(\bar{H},\bar{M})$ and it is attained.  
\end{thm}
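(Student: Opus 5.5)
The plan is to show $\lambda^*\le\lambda_{\min}(\bar{H},\bar{M})$ by evaluating $\mathcal{H}$ at a point of the form $0\oplus v$. The strict inequality in the hypothesis then gives $\lambda^*<\lambda$, and Lemma \ref{L1} delivers attainment. Write $\mu:=\lambda_{\min}(\bar{H},\bar{M})$.

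First, I note that $\bar{M}\succ0$, because $M_3\succ0$ as a principal submatrix of $M\succ0$. So $\mu$ is well defined and has the Rayleigh-quotient characterization
$$\mu=\min_{(z,t)\ne0}\dfrac{z^TW_3z-2tq^Tz+\alpha t^2}{z^TM_3z+t^2}.$$
Let $(z,t)\in\Bbb R^{s}\times\Bbb R$ be a generalized eigenvector attaining this minimum.

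The key step is to show $t\neq0$. Suppose $t=0$. Then $z\ne0$, and the quotient reduces to $\dfrac{z^TW_3z}{z^TM_3z}\ge\lambda_{\min}(W_3,M_3)$. This would give $\mu\ge\lambda_{\min}(W_3,M_3)$, contradicting the hypothesis $\mu<\lambda_{\min}(W_3,M_3)$. Hence $t\ne0$. Since the quotient is homogeneous of degree zero, we may rescale to $t=1$, so that
$$\mu=\dfrac{z^TW_3z-2q^Tz+\alpha}{z^TM_3z+1}.$$

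Next, observe that when $u=0$ the terms involving $A_1,p,M_1,M_2$ and $D_1$ vanish. Therefore $\mathcal{H}(0,z)$ equals exactly the right-hand side above, i.e., $\mathcal{H}(0,z)=\mu$. Consequently
$$\lambda^*=\inf\mathcal{H}\le\mathcal{H}(0,z)=\mu<\lambda_{\min}(W_3,M_3)=\lambda.$$
Lemma \ref{L1} now applies and shows that $\lambda^*$ is attained.

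The only delicate point is ruling out $t=0$ in the eigenvector. This is exactly where the strict inequality of the hypothesis is used; everything else is direct substitution.
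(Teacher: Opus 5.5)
Your proposal is correct and follows the paper's proof of the bound. You use the same generalized eigenvector $(z,t)$ of $(\bar{H},\bar{M})$, the same use of the strict hypothesis to force $t\neq0$, and the same evaluation $\mathcal{H}(0\oplus z)=\lambda_{\min}(\bar{H},\bar{M})$, which the paper writes via $F\hat{v}$.

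For attainment you invoke Lemma \ref{L1} through $\lambda^*\le\lambda_{\min}(\bar{H},\bar{M})<\lambda$. The paper instead re-derives attainment inline: it shows that the limiting direction of an unbounded minimizing sequence lies in $\ker\bar{B}$, so its Rayleigh quotient is at least $\lambda_{\min}(W_3,M_3)$. Your shortcut is valid and more economical.
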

 \proof
   Let $\bar{v}=(v^T, t)^T\in\Bbb R^{n-k}\times\Bbb R$ 
   be the generalized eigenvector corresponding to $\lambda_{\min}(\bar{H},\bar{M}),$ the smallest generalized eigenvalue of
 the couple $(\bar{H},\bar{M}):$ 
 $$\lambda_{\min}(\bar{H},\bar{M})=\dfrac{\bar{v}^T\bar{H}\bar{v}}{\bar{v}^T\bar{M}\bar{v}}=\dfrac{v^TW_3v
 -2tq^Tv+\alpha t^2}{v^TM_3v+t^2}.$$
   If $t=0$ then  
 $$\lambda_{\min}(\bar{H},\bar{M})=\dfrac{\bar{v}^T\bar{H}\bar{v}}{\bar{v}^T\bar{M}\bar{v}}=\dfrac{v^TW_3v}
 {v^TM_3v}\ge\lambda_{\min}(W_3,M_3).$$ This contradiction shows that $t\ne0.$
 Let $F$ be an $n\times s$ matrix defined as following
  $$F=\left(\begin{matrix}0&0&\ldots&0\\ 0&0&\ldots&0\\
\vdots&\vdots&\ldots&\vdots\\
 0&0&\ldots&0\\
1&0&\ldots&0\\
0&1&\ldots&0\\
\vdots&\vdots&\ldots&\vdots\\
0&0&\ldots&1 \end{matrix}\right)=\left(\begin{matrix}{\bf 0}_{k\times s}\\ I_s  \end{matrix}\right)\in\Bbb R^{n\times s}.$$
Then $F^T\bar{W}F=W_3,  F^TMF=M_3,$  $F^T\bar{B}=0_{s\times n}$ and
$F^T(p\oplus q)=q.$ We have
 \begin{align}
 \lambda_{\min}(\bar{H},\bar{M})=&\dfrac{\bar{v}^T\bar{H}\bar{v}}{\bar{v}^T\bar{M}\bar{v}}=
 \dfrac{\frac{\bar{v}^T}{t}\bar{H}\frac{\bar{v}}{t}}{\frac{\bar{v}^T}{t}\bar{M}\frac{\bar{v}}{t}}= \dfrac{\hat{v}^TW_3\hat{v}-2q^T\hat{v}+\alpha}{\hat{v}^TM_3\hat{v}+1}\nonumber\\
 =& \dfrac{\hat{v}^T F^TWF\hat{v}- 2(F^T(p\oplus q))^T\hat{v}+\alpha}{\hat{v}^TF^TMF\hat{v}+1}+\hat{v}^TF^T\bar{B}F\hat{v}\nonumber\\
 =&\dfrac{(F\hat{v})^T W(F\hat{v})- 2(p\oplus q)^T(F\hat{v})+\alpha}{(F\hat{v})^TM(F\hat{v})+1}+(F\hat{v})^T\bar{B}(F\hat{v})\nonumber\\
 =&\mathcal{H}(F\hat{v})\ge \lambda^* \label{bt1},
 \end{align}
 here  $\hat{v}=\frac{v}{t}\in\Bbb R^{n-k}.$

 Now, suppose  that $\lambda^*$
  is not attained. Then there is a sequence
 $(y_n)=(u_n\oplus v_n), \|y_n\|\rightarrow\infty$ such that
 $$\dfrac{u_n^TA_1u_n-2p^Tu_n+v_u^TW_3v_n-2q^Tv_n+\alpha}{(u_n\oplus v_n)^TM (u_n\oplus v_n)+1}+u_n^TD_1 u_n\rightarrow \lambda^* \text{ as } n\rightarrow\infty. $$
 Since $f(y_n)=f(u_n\oplus v_n):=\dfrac{u_n^TA_1u_n-2p^Tu_n+v_u^TW_3v_n-2q^Tv_n+\alpha}{(u_n\oplus v_n)^TM (u_n\oplus v_n)+1}$ and 
 $\dfrac{y_n}{\|y_n\|}$ are both bounded, there must be a subsequence $(y_{n_k})=(u_{n_k}\oplus v_{n_k})$
  such that 
  $$f(y_{n_k})\rightarrow \eta,~ \dfrac{y_{n_k}}{\|y_{n_k}\|}\rightarrow d  \text{ as }
   k\rightarrow\infty.$$
   We observe that  $ f(y_{n_k})+y_{n_k}^T\bar{B}y_{n_k}  \rightarrow \lambda^*$ as $ k\rightarrow\infty.$
     So
  $$\lim_{k\rightarrow\infty}\dfrac{f(y_{n_k})+y_{n_k}^T\bar{B}y_{n_k}}{\|y_{n_k}\|^2}=0,$$
  where  $\lim_{k\rightarrow\infty}\dfrac{f(y_{n_k})}{\|y_{n_k}\|^2}=0$ since  $f(y_n)$ is bounded. We thus have
    $$\lim_{k\rightarrow\infty}\dfrac{y_{n_k}^T\bar{B}y_{n_k}}{\|y_{n_k}\|^2}=
  \lim_{k\rightarrow\infty}\dfrac{y_{n_k}^T}{\|y_{n_k}\|}\bar{B}\dfrac{y_{n_k}}{\|y_{n_k}\|}=d^T\bar{B}d=0.$$
  Since $\bar{B}\succeq0,$ $d^T\bar{B}d=0$ implies $\bar{B}d=0.$  This shows that $d$ must be $d=Fv$ for
 some $v\in\Bbb R^s.$ Then
\begin{align*}
  \lambda^*= & \lim_{k\rightarrow\infty}\{f(y_{n_k})+y_{n_k}^T\bar{B}y_{n_k} \}\\
  \ge &  \lim_{k\rightarrow\infty}\left\{f(y_{n_k})=\dfrac{y_{n_k}^T\bar{W}y_{n_k}-2(p^T~q^T)y_{n_k}+\alpha}{y_{n_k}^TMy_{n_k}+1}\right\}\\
  =& \lim_{k\rightarrow\infty} \dfrac{\dfrac{y_{n_k}^T}{\|y_{n_k}\|}\bar{W}\dfrac{y_{n_k}}{\|y_{n_k}\|}-2(p^T~q^T)\dfrac{y_{n_k}}{\|y_{n_k}\|^2}+\dfrac{\alpha}{\|y_{n_k}\|^2}}
  {\dfrac{y_{n_k}^T}{\|y_{n_k}\|}M\dfrac{y_{n_k}}{\|y_{n_k}\|} +\dfrac{1}{\|y_{n_k}\|^2}}=\dfrac{d^T\bar{W}d}{d^TMd}=\dfrac{v^TF^T\bar{W}Fv}{v^TF^TMFv}\\
   \ge &\min_{\|Fv\|=1} \dfrac{v^TW_3v}{v^TM_3v} =\min_{\|v\|^2=1} \dfrac{v^TW_3v}{v^TM_3v}=\lambda_{\min}(W_3,M_3) >\lambda_{\min}(\bar{H}, \bar{M}).
 \end{align*}
  This contradicts to \eqref{bt1} obtained above. So $\lambda^*$ must be attained.
 \endproof
 
The last case is $\lambda_{\min}(\bar{H},\bar{M})=\lambda_{\min}(W_3,M_3).$ 
 Let 
 $h_1(u)=u^TA_1u-2p^Tu$ and $h_2(v)=v^TW_3v-2q^Tv+\alpha,$ then
  if $\min_{v\in\Bbb R^{n-k}} h_2(v)=0$
 at $v^*,$  $(0,v^*)$ is an optimal solution of \eqref{dang2} and the optimal value is then zero. 
 However, this situation does not occur when  $\lambda_{\min}(\bar{H},\bar{M})=\lambda_{\min}(W_3,M_3).$
 \begin{lem}
   If  $\lambda_{\min}(\bar{H},\bar{M})=\lambda_{\min}(W_3,M_3)$ then 
   $\min_{v\in\Bbb R^s} h_2(v)>0,$ and thus $\alpha$ must satisfy $\alpha>\sum_{i=1}^s\dfrac{q_i^2}{\gamma_i}.$
 \end{lem}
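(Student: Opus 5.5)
The plan is to argue by contraposition, using only three facts: the nonnegativity of $g$ on $\Bbb R^n$, the positive definiteness of $W_3={\rm diag}(\gamma_1,\ldots,\gamma_s)$ with $\gamma_i>0$, and $M_3\succ0$, which holds because $M\succ0$.

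First we note that $h_2(v)\ge0$ for all $v\in\Bbb R^s$. Indeed, in the setting of \eqref{dang2}, $h_2(v)$ is exactly $g(P(0\oplus v))$, and $g\ge0$ on $\Bbb R^n$. Since $W_3\succ0$ is diagonal, $h_2$ is strictly convex. Minimizing coordinatewise, its unique minimizer is $v^*_i=q_i/\gamma_i$, and
$$\min_{v\in\Bbb R^s}h_2(v)=\alpha-\sum_{i=1}^s\dfrac{q_i^2}{\gamma_i}.$$
So the second claim, $\alpha>\sum_{i=1}^s q_i^2/\gamma_i$, is equivalent to the first, and it suffices to show that $\min h_2>0$.

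Suppose, to the contrary, that $\min h_2=h_2(v^*)=0$. We first show $\bar H\succeq0$ by homogenization. For $t\ne0$ we have $(v^T,t)\bar H(v^T,t)^T=t^2h_2(v/t)\ge0$. For $t=0$ the form reduces to $v^TW_3v\ge0$. Since $\bar M\succ0$, this gives $\lambda_{\min}(\bar H,\bar M)\ge0$.

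Next we test the Rayleigh quotient of the pair $(\bar H,\bar M)$ at the vector $(v^{*T},1)^T$. Its numerator is $h_2(v^*)=0$ and its denominator is $v^{*T}M_3v^*+1>0$. Hence $\lambda_{\min}(\bar H,\bar M)\le0$, and therefore $\lambda_{\min}(\bar H,\bar M)=0$.

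On the other hand, $W_3\succ0$ and $M_3\succ0$ give
$$\lambda_{\min}(W_3,M_3)=\min_{v\ne0}\dfrac{v^TW_3v}{v^TM_3v}>0.$$
This contradicts the hypothesis $\lambda_{\min}(\bar H,\bar M)=\lambda_{\min}(W_3,M_3)$, so $\min h_2>0$.

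No step is a real obstacle. The only point needing care is to use the generalized Rayleigh quotient characterization of $\lambda_{\min}(\cdot,\cdot)$ with the positive definite second matrix $\bar M$. This is the same characterization the paper already used when it introduced $\lambda_{\min}(W_3,M_3)$ and in the proof of Theorem \ref{mainthm1}.
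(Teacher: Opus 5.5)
Your proposal is correct and follows essentially the same route as the paper. Both arguments use $\bar H\succeq0$ to get $\lambda_{\min}(\bar H,\bar M)\ge0$, then evaluate the homogenized quotient at $t=1$ at a zero of $h_2$ to force $\lambda_{\min}(\bar H,\bar M)=0<\lambda_{\min}(W_3,M_3)$, and finally use the closed-form minimum $\alpha-q^TW_3^{-1}q$ of $h_2$ to get the bound on $\alpha$; your homogenization argument via $h_2(v)=g(P(0\oplus v))\ge0$ also supplies a justification for $\bar H\succeq0$, which the paper only asserts.
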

 \proof  Since $\bar{H}\succeq0$ and $\bar{M}\succeq0,$ we have 
 \begin{align*}
 0\le\lambda_{\min}(\bar{H},\bar{M}) =&\min_{(v,t)^T\in\Bbb R^{s+1}}\dfrac{v^TW_3v-2q^Tvt+\alpha t^2}{v^TM_3v+t^2}\\
 \le & \min_{v\in\Bbb R^s}\dfrac{v^TW_3v-2q^Tv +\alpha }{v^TM_3v+1} =
 \min_{v\in\Bbb R^s}\dfrac{h_2(v)}{v^TM_3v+1}\le \min_{v\in\Bbb R^s} h_2(v).
 \end{align*}
 If $\min_{v\in\Bbb R^s} h_2(v)=0$ then  $\lambda_{\min}(\bar{H},\bar{M})=0.$ However, since  $W_3\succ0$ and $M_3\succ0,$ 
 we have  $\lambda_{\min}(W_3,M_3)>0.$ This shows $\lambda_{\min}(\bar{H},\bar{M})<\lambda_{\min}(W_3,M_3)$ and we get a contradiction.
 So $\min_{v\in\Bbb R^s} h_2(v)>0.$ Let $v^*$ be the minimizer of $h_2(v),$ 
 we have $v^*=W_3^{-1}q$ and $h_2(v^*)=-q^TW_3^{-1}q+\alpha>0.$
 This requires $\alpha>\sum_{i=1}^s\dfrac{q_i^2}{\gamma_i}.$
 \endproof 
 
 Since  $W_3\succ0$ and $M_3\succ0,$  they are SDC \cite[Lemma 6]{Bong}, i.e.,  there exists a nonsingular matrix
$N$ such that  
$$N^TW_3N={\rm diag}(\eta_1, \eta_2, \ldots, \eta_s), ~ N^TM_3N={\rm diag}(\xi_1, \xi_2, \ldots, \xi_s).$$
Then $\lambda_{\min}(W_3,M_3)$ is easy to determine as follows:
$$\lambda_{\min}(W_3,M_3)=\lambda_{\min}(N^TW_3N,N^TM_3N)=\min_{1\le i\le s}\dfrac{\eta_i}{\xi_i}:=\dfrac{\eta_r}{\xi_r}.$$
Let $v=Nz$ and  $L={\rm diag}(I_k, N).$ We make a change of variables  $\left(\begin{matrix} u\\ v\end{matrix}\right)=L\left(\begin{matrix} u\\ z\end{matrix}\right).$
Then
\begin{align}\label{SubSDC}
  \mathcal{H}(u,Nz)&=\dfrac{u^TA_1u-2p^Tu+z^T(N^TW_3N)z-2(N^Tq)^Tz+\alpha}
  {u^TM_1u+2(N^TM_2^Tu)^Tz+z^T(N^TM_3N)z+1}+u^TD_1u\nonumber\\
  &=\dfrac{u^TA_1u-2p^Tu+\sum_{i=1}^s\eta_iz_i^2-2\sum_{i=1}^s\bar{q}_iz_i+\alpha}
  {u^TM_1u+2(N^TM_2^Tu)^Tz+\sum_{i=1}^s\xi_iz_i^2+1}+u^TD_1u,
\end{align}
where $N^Tq:=\bar{q}=(\bar{q}_1, \bar{q}_1, \ldots,\bar{q}_s)^T.$
\begin{lem}\label{B2}
  If $\lambda_{\min}(\bar{H},\bar{M})=\lambda_{\min}(W_3,M_3)$ then 
  \begin{align*} 
    \min_{v\in\Bbb R^s}\dfrac{v^TW_3v-2q^Tv+\alpha}{v^TM_3v+1}=\lambda_{\min}(\bar{H},\bar{M})
  \end{align*}
  and the optimal value $\min_{v\in\Bbb R^s}\dfrac{v^TW_3v-2q^Tv+\alpha}{v^TM_3v+1}$  is not attained.
  \end{lem}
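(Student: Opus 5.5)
Let $\phi(v)=\dfrac{h_2(v)}{v^TM_3v+1}$ and $\mu:=\lambda_{\min}(\bar{H},\bar{M})=\lambda_{\min}(W_3,M_3)$. The plan has two steps. First we show $\inf_v\phi(v)=\mu$. Then we show that no $v$ attains this value, using the previous lemma ($\min h_2>0$).

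\emph{Lower bound.} For every $v$, taking $t=1$ in the Rayleigh quotient of the pencil $(\bar H,\bar M)$ gives
\[
\phi(v)=\frac{(v^T,1)\bar{H}(v^T,1)^T}{(v^T,1)\bar{M}(v^T,1)^T}\ge\mu .
\]

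\emph{Upper bound.} Let $z$ be a generalized eigenvector of $(W_3,M_3)$ for $\mu$, so $W_3z=\mu M_3z$. Then
\[
\phi(\tau z)=\frac{\tau^2 z^TW_3z-2\tau q^Tz+\alpha}{\tau^2z^TM_3z+1}\longrightarrow \frac{z^TW_3z}{z^TM_3z}=\mu
\quad\text{as } |\tau|\to\infty .
\]
Hence $\inf\phi=\mu$, which is the stated equality. This is the same computation the paper used before Lemma \ref{L1}.

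\emph{Non-attainment.} Suppose $\phi(\hat v)=\mu$ for some $\hat v$. Then $\bar v=(\hat v^T,1)^T$ attains the minimum of the Rayleigh quotient $\bar v^T\bar H\bar v/\bar v^T\bar M\bar v$. Since $\bar M\succ0$, this quotient is a smooth function, so its stationarity condition gives $(\bar H-\mu\bar M)\bar v=0$. This says $\bar v$ is a generalized eigenvector for the smallest eigenvalue whose last component is $t=1\neq0$. The first block of $(\bar H-\mu\bar M)\bar v=0$ reads
\[
(W_3-\mu M_3)\hat v=q,
\]
and the last row reads
\[
-q^T\hat v+\alpha-\mu=0 .
\]

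The main obstacle is turning these two equations into a contradiction. It cannot come from positivity of $h_2$ alone, since a value $\mu>0$ is allowed. Instead I would argue as follows.

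Because $\mu=\lambda_{\min}(W_3,M_3)$, the matrix $W_3-\mu M_3\succeq0$ is singular, with kernel spanned by the generalized eigenvectors $z$ for $\mu$. Since $q\in{\rm range}(W_3-\mu M_3)$, we get $z^Tq=0$ for every such $z$.

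Now test $\bar H-\mu\bar M\succeq0$ on the vectors $w_\tau=(\hat v^T+\tau z^T,1)^T$. Using $z^Tq=0$, the quadratic form satisfies
\[
w_\tau^T(\bar H-\mu\bar M)w_\tau=\bar v^T(\bar H-\mu\bar M)\bar v=0
\]
for all $\tau$. So the eigenspace of $(\bar H,\bar M)$ for $\mu$ contains both $(\hat v^T,1)^T$ and $(z^T,0)^T$; in other words, $\mu$ is an eigenvalue whose eigenspace contains a vector with last component $1$.

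That alone is not yet contradictory, so the route would lean on the diagonalized form \eqref{SubSDC}, where everything is explicit. With $N^TW_3N={\rm diag}(\eta_i)$, $N^TM_3N={\rm diag}(\xi_i)$ and $\bar q=N^Tq$, the function becomes
\[
\phi(Nz)=\frac{\sum_i(\eta_iz_i^2-2\bar q_iz_i)+\alpha}{\sum_i\xi_iz_i^2+1}.
\]
The condition $\phi\ge\mu$ is equivalent to the separable inequality
\[
\sum_i\Bigl((\eta_i-\mu\xi_i)z_i^2-2\bar q_iz_i\Bigr)+\alpha-\mu\ge0 .
\]
I would show that $\bar q_i=0$ whenever $\eta_i/\xi_i=\mu$, and that the minimum over $z$ is then exactly $\alpha-\mu-\sum_{\eta_i>\mu\xi_i}\bar q_i^2/(\eta_i-\mu\xi_i)$.

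Equality of the eigenvalues, meaning $\mu$ is also the smallest root of the secular equation of $(\bar H,\bar M)$, should force this minimum to be $0$. The crux is to show that equality of the two minimal eigenvalues forces the secular function to vanish only in the limit. I expect this to be the delicate point: it is the degenerate case where attainment might actually occur when $\bar q_r=0$. If the direct contradiction fails, I would fall back to showing non-attainment under the generic condition $\bar q_r\neq0$, which is what makes $\lambda_{\min}(\bar H,\bar M)<\eta_r/\xi_r$ fail only in the limit.
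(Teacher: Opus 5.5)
Your first half, that the infimum equals $\mu$, is correct and is essentially the paper's argument. The lower bound is the $t=1$ slice of the Rayleigh quotient of $(\bar H,\bar M)$. The upper bound is the limit along a minimal generalized eigenvector of $(W_3,M_3)$; the paper does the same thing after diagonalizing, by sending $z_r\to\infty$.

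The gap is the non-attainment step, and it cannot be closed, because that part of the statement is false under the stated hypothesis. Your own computation already shows this:
\begin{itemize}
\item The hypothesis gives $\bar H-\mu\bar M\succeq0$, hence $\phi\ge\mu$. Boundedness below of your separable inequality then forces $\bar q_i=0$ for every $i$ with $\eta_i=\mu\xi_i$. So $\bar q_r=0$ always, and your fallback case $\bar q_r\neq0$ never occurs.
\item Then $\phi(Nz)-\mu$ is a separable quadratic in $z$, nonnegative by hypothesis, divided by $\sum_i\xi_iz_i^2+1$. The minimum of that quadratic is $\alpha-\mu-\sum_{\eta_i>\mu\xi_i}\bar q_i^2/(\eta_i-\mu\xi_i)\ge0$.
\item The value $\mu$ is attained exactly when this number is $0$, equivalently when $\ker(\bar H-\mu\bar M)$ contains a vector with nonzero last entry.
\end{itemize}
Equality of the two minimal eigenvalues gives only $\ge0$. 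Your sentence ``should force this minimum to be $0$'' would, if true, prove attainment rather than exclude it.

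Here is a concrete counterexample. Take (P) with $n=2$, $B={\rm diag}(1,0)$, $W=I_2$, $a=0$, $\alpha=1$ and $P=I$, so $k=s=1$, $W_3=M_3=1$, $q=0$. Then $\bar H=\bar M=I_2$ and $\lambda_{\min}(\bar H,\bar M)=\lambda_{\min}(W_3,M_3)=1$. Also $\min h_2=1>0$, as the preceding lemma demands. Yet $\phi(v)=(v^2+1)/(v^2+1)\equiv1$ is attained at every $v$.

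A nonconstant instance is $W_3={\rm diag}(1,2)$, $M_3=I_2$, $q=(0,1)^T$, $\alpha=2$. Both minimal eigenvalues equal $1$, and $\phi(v)-1=(v_2-1)^2/(\|v\|^2+1)$ vanishes on $\{v_2=1\}$.

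The paper's proof does not supply the missing step either. It only exhibits the infimum as a limit along $z_r\to\infty$ and then asserts non-attainment, but approaching the infimum at infinity does not prevent reaching it at a finite point. The lemma needs the extra hypothesis $\alpha-\mu>\sum_{\eta_i>\mu\xi_i}\bar q_i^2/(\eta_i-\mu\xi_i)$, i.e., every generalized eigenvector of $(\bar H,\bar M)$ for $\mu$ has last component $0$. Under that hypothesis your separable computation gives non-attainment at once.

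In the first example the full objective is $\mathcal H(x)=1+x_1^2$, minimized at every $x=(0,v)$. So the later conclusion that an attained minimizer must have $u^*\neq0$ fails too.
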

  \proof 
  Let $\nu^*=\min_{v\in\Bbb R^s}\dfrac{v^TW_3v-2q^Tv+\alpha}{v^TM_3v+1}.$ We have
  \begin{align*} 
  \lambda_{\min}(\bar{H},\bar{M})   & = \min_{0\ne(v,t)^T\in\Bbb R^{s+1}}\dfrac{v^TW_3v-2tq^Tv+\alpha t^2}{v^TM_3v+t^2} \\
     & \le  \min_{v\in\Bbb R^s}\dfrac{v^TW_3v-2q^Tv+\alpha}{v^TM_3v+1} \\
      & = \min_{z\in\Bbb R^s}\dfrac{\sum_{i=1}^s \eta_iz_i^2-2\sum_{i=1}^s\bar{q}_iz_i +\alpha}{\sum_{i=1}^s \xi_iz_i^2  +1}\\
    &=\min_{z\in\Bbb R^s}\dfrac{\sum_{r\ne i=1}^s \eta_i\frac{z_i^2}{z_r^2}+\eta_r-2\sum_{r\ne i=1}^s\bar{q}_i\frac{z_i}{z_r^2}
+\frac{\alpha}{z_r^2}}{\sum_{r\ne i=1}^s \xi_i\frac{z_i^2}{z_r^2}+\xi_r  +\frac{1}{z_r^2}}
 \xrightarrow{z_r\rightarrow\infty} \dfrac{\eta_r}{\xi_r}\\
 & =\lambda_{\min}(W_3,M_3)
 =\lambda_{\min}(\bar{H},\bar{M}).
  \end{align*}
  We observe that $\nu^*=\dfrac{\eta_r}{\xi_r}$ but is not attained.
  \endproof

 Let 
 $$\hat{H}=\left(\begin{matrix}
                 A_1 &0 & -p \\
                 0& W_3 & -q \\
                 -p^T & -q^T & \alpha 
               \end{matrix}\right), \hat{M}=\left(\begin{matrix}
                 M_1 &M_2 & 0 \\
                 M_2^T& M_3 & 0 \\
                 0 & 0 & 1 
               \end{matrix}\right).$$ 
We then obtain the following important result.
 \begin{thm}\label{main}
Suppose  $\lambda_{\min}(\bar{H},\bar{M})=\lambda_{\min}(W_3,M_3).$ If \eqref{dang2} is attained at
 $u^*\oplus v^*,$ then
 $u^*$ must be nonzero and
 $$\lambda_{\min}(\hat{H},\hat{M})< \lambda^*<\lambda_{\min}(\bar{H},\bar{M}).$$ 
 
 \end{thm}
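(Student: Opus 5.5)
The plan is to reduce everything to three comparisons: with Lemma \ref{B2} (the restricted problem $u=0$), with the homogenized pencil $(\hat H,\hat M)$ (lower bound), and with the upper bound $\lambda^*\le\lambda_{\min}(W_3,M_3)$ established before Lemma \ref{L1}. Throughout, write $\lambda=\lambda_{\min}(W_3,M_3)=\lambda_{\min}(\bar H,\bar M)$ and
$$f(u,v)=\frac{u^TA_1u-2p^Tu+v^TW_3v-2q^Tv+\alpha}{u^TM_1u+2(M_2^Tu)^Tv+v^TM_3v+1},$$
so that $\mathcal{H}(u,v)=f(u,v)+u^TD_1u$.

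\emph{Step 1: $u^*\neq 0$.} Suppose $u^*=0$. Then $\lambda^*=\mathcal{H}(0,v^*)=h_2(v^*)/({v^*}^TM_3v^*+1)$. By Lemma \ref{B2}, the infimum of this ratio over $v$ equals $\lambda$ and is not attained, so $\mathcal{H}(0,v^*)>\lambda$. This contradicts $\lambda^*\le\lambda$, hence $u^*\ne0$. Since $D_1\succ0$, we get ${u^*}^TD_1u^*>0$.

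\emph{Step 2: lower bound.} Observe that $f(u,v)$ is exactly the Rayleigh quotient of the pencil $(\hat H,\hat M)$ at $(u,v,1)$. Note that $\hat M\succ0$, because its leading block is $M\succ0$. Therefore $f(u^*,v^*)\ge\lambda_{\min}(\hat H,\hat M)$, and so
$$\lambda^*=f(u^*,v^*)+{u^*}^TD_1u^*>\lambda_{\min}(\hat H,\hat M),$$
where the inequality is strict by Step 1. This part is routine.

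\emph{Step 3: strict upper bound $\lambda^*<\lambda$.} We already know $\lambda^*\le\lambda$, so it remains to exclude $\lambda^*=\lambda$ with attainment at $(u^*,v^*)$, $u^*\neq0$. This is the step I expect to be the main obstacle. First I would exploit that $(u^*,v^*)$ is an unconstrained minimizer, so $\nabla\mathcal{H}(u^*,v^*)=0$. The $v$-component of this condition gives $W_3v^*-q=f^*\,(M_3v^*+M_2^Tu^*)$ with $f^*=f(u^*,v^*)=\lambda-{u^*}^TD_1u^*<\lambda$. Combined with the $u$-component, I would then try to produce a vector $(u,v,t)$ with $t\neq0$ whose $(\hat H,\hat M)$-quotient, or a suitable restriction to the $(v,t)$ block, forces $\lambda_{\min}(\bar H,\bar M)\le f^*<\lambda$. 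That would contradict the hypothesis $\lambda_{\min}(\bar H,\bar M)=\lambda_{\min}(W_3,M_3)$. The idea is that $\lambda_{\min}(\bar H,\bar M)=\lambda$ means the $(v,t)$-pencil has no eigenvalue below $\lambda$ coming from the affine part. Meanwhile, the stationarity equation makes $(v^*,1)$ almost a generalized eigenvector of $(\bar H,\bar M)$ with eigenvalue $f^*$, up to the coupling term $M_2^Tu^*$.

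If this direct route does not close, I would instead use the diagonalized form \eqref{SubSDC}. I would examine the one-parameter families $\mathcal{H}(su^*,v^*+tz)$, with $z$ the $r$-th coordinate direction after the $N$-change of variables, and look for $(s,t)$ giving a value strictly below $\lambda$. This would again contradict $\lambda^*=\lambda$. Controlling the coupling term $2(N^TM_2^Tu)^Tz$ is the delicate point in this approach.
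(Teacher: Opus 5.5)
Your Steps 1 and 2 are the paper's own argument. For $u^*=0$ you contradict the non-attainment in Lemma~\ref{B2} together with $\lambda^*\le\lambda_{\min}(W_3,M_3)$. You then read $f(u^*,v^*)$ as the $(\hat H,\hat M)$-quotient at $(u^*,v^*,1)$ and add ${u^*}^TD_1u^*>0$. The gap is Step 3, and it cannot be closed, because the strict bound $\lambda^*<\lambda_{\min}(\bar H,\bar M)$ is false. Take $k=s=1$, $A_1=1$, $D_1=1$, $W_3=4$, $M=I_2$, $p=q=0$, $\alpha=5$; in (P) this is $P=I$, $W={\rm diag}(1,4)$, $B={\rm diag}(1,0)$, $a=0$. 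Then
\[
\mathcal{H}(u,v)=\frac{u^2+4v^2+5}{u^2+v^2+1}+u^2,\qquad
\mathcal{H}(u,v)-4=\frac{(u^2-1)^2+u^2v^2}{u^2+v^2+1}\ge 0 .
\]
Here $\lambda_{\min}(W_3,M_3)=4=\lambda_{\min}({\rm diag}(4,5),I_2)=\lambda_{\min}(\bar H,\bar M)$, and $\mathcal{H}(0,v)=4+1/(v^2+1)$, so Lemma~\ref{B2} holds as stated. Yet \eqref{dang2} is attained exactly at $(\pm1,0)$ with $\lambda^*=4=\lambda_{\min}(\bar H,\bar M)$. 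Also $\lambda_{\min}(\hat H,\hat M)=1$, consistent with your Steps 1--2.

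The example also shows where your stationarity idea breaks. At $(1,0)$ the $v$-equation $W_3v^*-q=f^*(M_3v^*+M_2^Tu^*)$ holds with $f^*=3$. But it only controls the first $s$ rows of $(\bar H-f^*\bar M)({v^*}^T,1)^T$; here they vanish, since $M_2=0$. The last row would require $\alpha-q^Tv^*=f^*$, i.e.\ $5=3$. No vector has $(\bar H,\bar M)$-quotient below $4$. Your fallback search for $\mathcal{H}(su^*,v^*+tz)<\lambda$ is equally hopeless, since $\mathcal{H}\ge 4$ everywhere. The paper's proof fails at the same spot: it asserts $\mathcal{H}(u^*,v^*)<\inf_v\mathcal{H}(0,v)$ ``due to $u^*\neq0$''. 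Optimality only gives $\le$, and $u^*\neq0$ does not make it strict. Only the non-strict bound $\lambda^*\le\lambda_{\min}(\bar H,\bar M)$, which you already have, survives.

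Step 1 has a further defect inherited from the paper. Write $\lambda=\lambda_{\min}(W_3,M_3)$. The non-attainment part of Lemma~\ref{B2} holds only if no vector $({v}^T,1)^T$ lies in $\ker(\bar H-\lambda\bar M)$. Indeed, $\bar H-\lambda\bar M\succeq0$ under the hypothesis, so the ratio $h_2(v)/(v^TM_3v+1)$ equals $\lambda$ exactly at such $v$. For a counterexample, take $k=s=1$, $A_1=2$, $D_1=1$, $W_3=1$, $M=I_2$, $p=q=0$, $\alpha=1$. Then $\bar H=\bar M=I_2$, so the hypothesis holds. Here $\mathcal{H}(u,v)=1+u^2+u^2/(u^2+v^2+1)$ is minimized exactly on $\{u=0\}$, so $u^*=0$ and $\lambda_{\min}(\hat H,\hat M)=\lambda^*=\lambda_{\min}(\bar H,\bar M)=1$. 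So even your Steps 1--2 need that extra kernel assumption.
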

\proof 
Suppose that \eqref{dang2} is attained
at $u^*\oplus v^*:$
\begin{align}\label{BDB}
\lambda^*=\underbrace{\dfrac{{u^*}^TA_1u^*-2p^Tu^*+{v^*}^TW_3v^*-2q^Tv^*+\alpha}{{u^*}^TM_1u^*+2(M_2^Tu^*)^Tv^*+{v^*}^TM_3v^*+1}}
_{f(u^*,v^*)}+{u^*}^TD_1u^*.
\end{align}
If $u^*=0,$ \eqref{BDB} is then
\begin{align}\label{BDB1}
\lambda^*=\dfrac{{v^*}^TW_3v^*-2q^Tv^*+\alpha}{{v^*}^TM_3v^*+1}
\ge \lambda_{\min}(\bar{H},\bar{M})=\lambda_{\min}(W_3,M_3)=\dfrac{\eta_r}{\xi_r}.
\end{align}
As seen, \eqref{BDB1} can be written as
\begin{align*} 
\lambda^*=&\dfrac{\sum_{i=1}^s \eta_iz_i^2-2\sum_{i=1}^sq_iz_i
+\alpha}{\sum_{i=1}^s \xi_iz_i^2  
+1}\\
=&\dfrac{\sum_{r\ne i=1}^s \eta_i\frac{z_i^2}{z_r^2}+\eta_r-2\sum_{r\ne i=1}^sq_i\frac{z_i}{z_r^2}
+\frac{\alpha}{z_r^2}}{\sum_{r\ne i=1}^s \xi_i\frac{z_i^2}{z_r^2}+\xi_r  
+\frac{1}{z_r^2}}
\ge \dfrac{\eta_r}{\xi_r}
\end{align*}
 Let $z_r\rightarrow\infty$ we will have $\lambda^*\rightarrow \dfrac{\eta_r}{\xi_r}$ but not attained.
 This contradiction shows that  $u^*\ne0.$ 
 We suppose that $ \lambda^*\ge\lambda_{\min}(\bar{H},\bar{M}).$ Then
\begin{align*} 
\lambda_{\min}(\bar{H},\bar{M})&\le \lambda^*=\dfrac{{u^*}^TA_1u^*-2p^Tu^*+{v^*}^TW_3v^*-2q^Tv^*+\alpha}{{u^*}^TM_1u^*+2(M_2^Tu^*)^Tv^*+{v^*}^TM_3v^*+1}
+{u^*}^TD_1u^*\\
&< \min_{u=0,v\in\Bbb R^s} \dfrac{v^TW_3v-2q^Tv+\alpha}{v^TM_3v+1}=\lambda_{\min}(\bar{H},\bar{M}),
\end{align*}
where the last inequality is due to  $u^*\ne0,$ and the last equality is due to Lemma \ref{B2}.  
This is again a contradiction.
 Finally, $\lambda_{\min}(\hat{H},\hat{M})< \lambda^*$ holds
naturally since 
\begin{align*}\lambda_{\min}(\hat{H},\hat{M})=&\min_{0\ne(u,v,t)^T\in\Bbb R^{k+s+1}}
\dfrac{u^TA_1u-2tp^Tu+v^TW_3v-2tq^Tv+\alpha t^2}{u^TM_1u+2t(M_2^Tu)^Tv+v^TM_3v+t^2}\\
\le&\min_{0\ne(u,v,1)^T\in\Bbb R^{k+s+1}}
\dfrac{u^TA_1u-2p^Tu+v^TW_3v-2q^Tv+\alpha }{u^TM_1u+2(M_2^Tu)^Tv+v^TM_3v+1}\\
< & \min_{0\ne(u,v)^T\in\Bbb R^{k+s}}\left\{
\dfrac{u^TA_1u-2p^Tu+v^TW_3v-2q^Tv+\alpha }{u^TM_1u+2(M_2^Tu)^Tv+v^TM_3v+1} +u^TD_1u\right\}=\lambda^*.
\end{align*}
The last inequality is strict because $u^*\ne0.$
\endproof

Theorem \ref{main} immediately gives the following consequence. 
\begin{cor}
  If    
 $\lambda_{\min}(\hat{H},\hat{M})=\lambda_{\min}(\bar{H},\bar{M})=\lambda_{\min}(W_3,M_3)$ 
 then \eqref{dang2} is not attained. 
\end{cor}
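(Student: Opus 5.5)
The corollary follows from Theorem \ref{main} by contraposition. Assume $\lambda_{\min}(\hat{H},\hat{M})=\lambda_{\min}(\bar{H},\bar{M})=\lambda_{\min}(W_3,M_3)$ and suppose, for contradiction, that \eqref{dang2} is attained at some $u^*\oplus v^*$.

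The second equality, $\lambda_{\min}(\bar{H},\bar{M})=\lambda_{\min}(W_3,M_3)$, is exactly the standing hypothesis of Theorem \ref{main}. Since \eqref{dang2} is attained, that theorem applies and gives $u^*\neq 0$ together with the strict chain
$$\lambda_{\min}(\hat{H},\hat{M})<\lambda^*<\lambda_{\min}(\bar{H},\bar{M}).$$
In particular $\lambda_{\min}(\hat{H},\hat{M})<\lambda_{\min}(\bar{H},\bar{M})$. This contradicts the first equality in our assumption, $\lambda_{\min}(\hat{H},\hat{M})=\lambda_{\min}(\bar{H},\bar{M})$. Hence \eqref{dang2} cannot be attained.

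No real obstacle arises, because all the analytic work is contained in Theorem \ref{main}. The only point to check is that its hypotheses are met: the equality $\lambda_{\min}(\bar{H},\bar{M})=\lambda_{\min}(W_3,M_3)$ is assumed, and attainment is supplied by the contradiction hypothesis. It is also worth noting that only the strict inequality between $\lambda_{\min}(\hat{H},\hat{M})$ and $\lambda_{\min}(\bar{H},\bar{M})$ is used. Consequently, the same argument proves the slightly stronger statement that, whenever $\lambda_{\min}(\bar{H},\bar{M})=\lambda_{\min}(W_3,M_3)$, attainment forces $\lambda_{\min}(\hat{H},\hat{M})<\lambda_{\min}(\bar{H},\bar{M})$.
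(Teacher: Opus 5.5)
Your contraposition from Theorem~\ref{main} is valid, and it is exactly the paper's route: the paper simply calls the corollary an immediate consequence of that theorem. The gap is the step where you invoke Theorem~\ref{main} to get $u^*\neq 0$ and $\lambda^*<\lambda_{\min}(\bar H,\bar M)$. That theorem is false in a degenerate case, and so is the corollary itself.

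The flaw originates in Lemma~\ref{B2}. Its proof only shows that $\inf_v \frac{v^TW_3v-2q^Tv+\alpha}{v^TM_3v+1}$ is approached as $z_r\to\infty$, and then asserts non-attainment without justification. Put $\mu:=\lambda_{\min}(W_3,M_3)=\lambda_{\min}(\bar H,\bar M)$. Then $\bar H-\mu\bar M\succeq 0$, so the quotient equals $\mu$ at $v$ exactly when $(\bar H-\mu\bar M)(v^T,1)^T=0$, i.e.\ $(W_3-\mu M_3)v=q$ and $q^Tv=\alpha-\mu$. Nothing in the hypotheses excludes this.

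Concretely, take $W=I_2$, $B={\rm diag}(1,0)$, $a=0$, $\alpha=1$, $P=I_2$. This is even a genuine TRTLS instance, with $A=\left(\begin{matrix}1&0\\0&1\\0&0\end{matrix}\right)$, $b=(0,0,1)^T$, $L=(1\ \ 0)\in\mathbb{R}^{1\times 2}$, $\rho=1$. Then \eqref{dang2} is $\mathcal{H}(u,v)=\frac{u^2+v^2+1}{u^2+v^2+1}+u^2=1+u^2$. Here $W_3=M_3=1$, $\bar H=\bar M=I_2$ and $\hat H=\hat M=I_3$, so all three generalized eigenvalues equal $1$. Yet $\lambda^*=1$ is attained at every $0\oplus v$. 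This is just the paper's own Example with $\alpha=2$ replaced by $\alpha=1$, and it also refutes your ``slightly stronger statement''.

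What is true can be proved directly, without Theorem~\ref{main}. Since $\hat M={\rm diag}(M,1)\succ 0$, the fractional part of $\mathcal{H}$ is the Rayleigh quotient of $(\hat H,\hat M)$ at $(u^T,v^T,1)^T$. Under the triple equality this gives $\mathcal{H}(u,v)\ge \mu+u^TD_1u$, while $\lambda^*\le\mu$ always holds (take $u=0$, $v=tz$, $t\to\infty$). Hence if \eqref{dang2} is attained at $u^*\oplus v^*$, then:
\begin{itemize}
\item $u^*=0$, because $D_1\succ0$ (the opposite of what Theorem~\ref{main} asserts);
\item $\lambda^*=\mu$;
\item $(v^{*T},1)^T\in\ker(\bar H-\mu\bar M)$.
\end{itemize}
Conversely, every such $v^*$ gives the optimal solution $0\oplus v^*$.

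So the corollary holds exactly under the extra hypothesis that $\ker(\bar H-\mu\bar M)$ contains no vector with nonzero last coordinate. The paper's example satisfies this, since there $\bar H-\bar M={\rm diag}(0,1)$. That hypothesis is the missing ingredient in both your argument and the paper's.
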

We consider the following simple example to see this conclusion. 
\begin{exam}
 $$ \lambda^*=\inf_{(u,v)^T\in\Bbb R^2}\left\{\dfrac{u^2+v^2+2}{u^2+v^2+1}+u^2\right\}.$$
\end{exam}
Here we have $W_3=(1)=M_3, \bar{H}=\left(\begin{matrix}1&0\\0&2 \end{matrix}\right), 
\bar{M}=\left(\begin{matrix}1&0\\0&1 \end{matrix}\right), \hat{H}=\left(\begin{matrix}1&0&0\\0&1&0\\0&0&2 \end{matrix}\right),
\hat{M}=\left(\begin{matrix}1&0&0\\0&1&0\\0&0&1 \end{matrix}\right).$ Then
$\lambda_{\min}(\hat{H},\hat{M})=\lambda_{\min}(\bar{H},\bar{M})=\lambda_{\min}(W_3,M_3).$ 
The problem is obviously not attained. Indeed, we have
$\mathcal{H}(u,v)\ge 1~\forall (u,v)^T\in\Bbb R^2.$  Let $u=0$ and $v\rightarrow\infty,$ then $\mathcal{H}(u,v)\rightarrow 1$
but $\lambda^*=1$ is not attained.

We now return to (TRTLS). As mentioned, (TRTLS) was formulated to address the problem of solving the overdetermined linear system 
$Ax\approx b,$ with $A\in\Bbb R^{m\times n}, b\in\Bbb R^m.$ We observe that if
$A$ is a square nonsingular matrix, the system $Ax\approx b$ then has a unique solution $x=-A^{-1}b$
and we do not need to use the TLS method. Thus, the main focus on solving 
(TRTLS) is when ${\rm rank}(A)<n.$ 
We rewrite (TRTLS) in the following form
\begin{align*}
{\rm(TRTLS)} \hspace*{0.7cm}\lambda^*:= \min_{x\in\Bbb R^n}\left\{f(x)=\dfrac{x^TA^TAx-2(A^Tb)^Tx+\|b\|^2}{\|x\|^2+1}+\rho x^TL^TLx\right\}.
\end{align*}
Let $A^TA=W, \rho L^TL=B, A^Tb=a$ and $\|b\|^2=\alpha,$ (TRTLS) then becomes the problem (P) and thus can always be transformed 
to one of the three forms, either \eqref{dang1} or \eqref{dang3} or \eqref{dang2}. Especially, 
if ${\rm rank}(A)<n,$  $A^TA$ is then singular so is $W.$ In this case,  $W$ can only be 
transformed to either  \eqref{a} or \eqref{c} while $B$ is always put to \eqref{a0}. 
As a result, (TRTLS) is then only reformulated to either
\eqref{dang1} or \eqref{dang3}.
\begin{enumerate}
  \item  If (TRTLS) is decomposed into \eqref{dang1},   
      then, by Theorem \ref{Thm1}, it is attained if and only if $b=0.$ The optimal solutions are then $0\oplus v,~v\in\Bbb R^s.$
  \item  If (TRTLS)  is decomposed into \eqref{dang3}, i.e., $P^TA^Tb=p\oplus q\oplus0,$ where $p=(p_1, \ldots, p_k)^T\in\Bbb R^k, q=(q_1, \ldots, q_s)^T\in\Bbb R^{s}, 0\in\Bbb R^{n-k-s},$ then by Theorem \ref{Thm2}, it is attained at 
  $u^*\oplus v^*\oplus \omega^*\in\Bbb R^k\times\Bbb R^s\times R^{n-k-s}$
  if and only if  $u^*=0,$ 
\begin{align}\label{cmHq2}
\sum_{i=1}^s\gamma_i{v_i^*}^2-2\sum_{i=1}^kq_i v_i^*+\|b\|^2=0
\end{align}
and $\omega^*\in\Bbb R^{n-k-s}$ is arbitrary. 
Since  
$$h(u,v):=\sum_{i=1}^k\beta_iu_i^2-2\sum_{i=1}^kp_iu_i +\sum_{i=1}^s\gamma_iv_i^2-2\sum_{i=1}^kq_iv_i+\|b\|^2\ge0 ~\forall u\oplus v\in\Bbb R^{k+s},$$
 we have
$h(0,v)\ge h(u^*,v^*)=0~ \forall v\in\Bbb R^s.$
This implies 
 \begin{align}\label{cmHq2_1}
 \sum_{i=1}^s\gamma_iv_i^2-2\sum_{i=1}^kq_iv_i+\|b\|^2\ge0~\forall v \in\Bbb R^s.
 \end{align}
 Both \eqref{cmHq2} and \eqref{cmHq2_1}   indicate that $v^*$ is a minimizer of the convex quadratic  function 
 $$g(v)=\sum_{i=1}^s\gamma_iv_i^2-2\sum_{i=1}^kq_iv_i+\|b\|^2.$$ 
 It is thus the root of the equation 
 $\nabla g(v)=0.$ Solving the equation we have $v_i^*=\dfrac{q_i}{\gamma_i}, i=1,2,\ldots, s,$
 and substituting those into \eqref{cmHq2} we obtain 
   $\|b\|^2=\sum_{i=1}^s\dfrac{q_i^2}{\gamma_i}.$  
   The optimal solutions of (TRTLS) are then of the form $0\oplus v^*\oplus \omega,$
   where $v_i^*=\dfrac{q_i}{\gamma_i}, i=1,2,\ldots, s,$ and $\omega\in\Bbb R^{n-k-s}$ is arbitrary.
   As a result, the attainment and optimal solutions of (TRTLS) in this case are as follows.
 \begin{cor}
   If (TRTLS) is transformed to \eqref{dang3} then it is attained if and only if  
   $\|b\|^2=\sum_{i=1}^s\dfrac{q_i^2}{\gamma_i}.$ The optimal solutions are then  
   $0\oplus v^*\oplus \omega^*\in\Bbb R^k\times\Bbb R^s\times R^{n-k-s}$ such that
    $v_i^*=\dfrac{q_i}{\gamma_i}, i=1,2,\ldots,s,$ and  $\omega^*\in\Bbb R^{n-k-s}$ is arbitrary.  
 \end{cor}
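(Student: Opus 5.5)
The corollary follows by specializing Theorem \ref{Thm2} to (TRTLS) and then pinning down the unique admissible $v^*$ from convexity. Throughout, $W=A^TA$, $B=\rho L^TL$, $a=A^Tb$ and $\alpha=\|b\|^2$. In the form \eqref{dang3}, the numerator is $h(u,v)=u^TA_1u-2p^Tu+v^TW_3v-2q^Tv+\|b\|^2$, with $W_3={\rm diag}(\gamma_1,\ldots,\gamma_s)\succ0$.

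The first step is to invoke Theorem \ref{Thm2}. It says (TRTLS) is attained at $u^*\oplus v^*\oplus\omega^*$ if and only if $u^*=0$, $g(v^*):=h(0,v^*)=0$, and $\omega^*$ is arbitrary. Attainment is therefore equivalent to the existence of some $v^*$ with $g(v^*)=0$. Moreover, the optimal set consists of all $0\oplus v^*\oplus\omega$ with $g(v^*)=0$.

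The second step is to characterize the zeros of $g$. Since $g(Py)\ge0$ for all $y$, and $h(u,v)=g(P(u\oplus v\oplus\omega))$ is independent of $\omega$, we get $g(v)=h(0,v)\ge0$ for all $v\in\Bbb R^s$. Hence any zero of $g$ is a global minimizer of this strictly convex quadratic, since $W_3\succ0$. The minimizer is unique and is given by $\nabla g=0$, that is, $v_i^*=q_i/\gamma_i$. Substituting, $\min g=\|b\|^2-\sum_{i=1}^s q_i^2/\gamma_i$.

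Combining the two steps: $g$ has a zero if and only if this minimum equals $0$, which is exactly $\|b\|^2=\sum_{i=1}^s q_i^2/\gamma_i$. In that case the zero is unique, so the optimal solutions are precisely $0\oplus v^*\oplus\omega^*$ with $v^*_i=q_i/\gamma_i$ and $\omega^*$ arbitrary.

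I expect no real obstacle; this is a direct consequence of Theorem \ref{Thm2}. The one point needing care is justifying $g\ge0$ on all of $\Bbb R^s$, which lets us conclude that a zero must coincide with the minimizer. I will take this from the standing assumption $g(x)\ge0$ in (P), which holds automatically here because the numerator is $\|Ax-b\|^2$. I will also make sure the sums run over $i=1,\ldots,s$; the paper's display writes $\sum_{i=1}^k q_iv_i$, which should be $\sum_{i=1}^s$.
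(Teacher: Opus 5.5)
Your proposal is correct and follows essentially the same route as the paper. Both invoke Theorem \ref{Thm2}, use $h(0,v)\ge 0$ on $\Bbb R^s$ to force any zero of $g$ to be the unique minimizer $v^*=W_3^{-1}q$ of the strictly convex quadratic, and substitute to get $\|b\|^2=\sum_{i=1}^s q_i^2/\gamma_i$. Your explicit treatment of the converse direction and of uniqueness via strict convexity is slightly more complete than the paper's write-up, and you are right that the index in $\sum_{i=1}^k q_iv_i$ should run to $s$.
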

\end{enumerate}

\section{Conclusion}
We have proposed a method of applying the simultaneous diagonalization via congruence of  matrices to solve the
Tikhonov regularization of the total least squares problem (TRTLS) with singular regularized penalty.  
We show that there is always a transformation to reformulate the problem to one of three simpler formats such that each of which
is then clearly characterized. Especially, if ${\rm rank}(A)<n,$ (TRTLS) is then completely solved: 
its attainment is completely characterized and its optimal solution set is found. 
The case ${\rm rank}(A)=n$ is more complicated. 
However, in that case the over-estimated linear system $Ax\approx b,$ which leads to  (TRTLS) has a unique solution.


\begin{thebibliography}{10}
		


		
	\bibitem{Beck06}
	Beck A.,  A. Ben-Tal (2006), On the Solution of the Tikhonov Regularization of the Total Least Squares Problem.  SIAM J. OPTIM. Vol. 17, No. 1, pp. 98-118
	
\bibitem{Bjorck} A. Björck (1996), Numerical Methods for Least Squares Problems, SIAM, Philadelphia
%
%
%
%
%
%


\bibitem{Dinkelbach} W. Dinkelbach: On nonlinear fractional programming. Manag. Sci. 13(7), 492-498 (1967)


\bibitem{Jiang}   R. Jiang, L. Duan,   Simultaneous diagonalization of matrices and its applications in quadratically constrained quadratic programming,  SIAM J. Optim. Vol. 26, No. 3  1649-1668 (2016)


\bibitem{Gould} N.I.M. Gould, D.P. Robinson, and H. Sue Thorne, On solving trust-region and other regularised subproblems in
optimization, Math. Program. Comput. 2 (2010), pp. 21-57.
	
%
	
%
	
	\bibitem{More93}
	Mor\'{e}, J.J., 1993. {\em Generalizations of the trust region problem}. Optimization methods and Software, 2(3-4), pp.  189-209.

\bibitem{Ngan} T. N. Nguyen, V. B. Nguyen, T. H. Le, R. L. Sheu, On Simultaneous Diagonalization via Congruence of
Real Symmetric Matrices and Applications, https://arxiv.org/abs/2004.06360v5	

\bibitem{Bong} V. B. Nguyen, T. N. Nguyen (2024), Positive semidefinite interval of matrix pencil and
its applications to the generalized trust region subproblems, Linear Algebra Appl., 680, 371-390

\bibitem{Bong1} V. B. Nguyen, Solving a generalization of the p-regularized subproblems by
	
	\bibitem{More-Sorensen83}
  Mor\'{e} J J,  Sorensen DC (1983). {\em Computing a trust region step}. SIAM Journal on Scientific and Statistical Computing, 4(3), 553-572.
  
\bibitem{Gander}  W. Gander, G. H. Golub, and U. von Matt, A constrained eigenvalue problem, Linear
Algebra Appl., 114/115 (1989), pp. 815-839

 \bibitem{Gene} Gene H. Golub, P. Christian Hansen,   Dianne P. \'{O'}Leary, Tikhonov Regularization and Total Least Squares,
SIAM J. MATRIX ANAL. APPL., Vol. 21, No. 1, pp. 185-194

 \bibitem{Golub80} Golub, G.H., Van Loan, C.F.: An analysis of the total least-squares problem. SIAM J. Numer. Anal.

 \bibitem{Golub96} Golub, G.H., Van Loan, C.F.: Matrix Computations, 3rd edn. The Johns Hopkins University Press,
Baltimore (1996)

	
%
%
%
%
%
%
\bibitem{Van} Van Huffel, S., Vandewalle, J.: The Total Least Squares Problem: Computational Aspects and Analysis.
Frontiers in Applied Mathematics. SIAM, Philadelphia (1991)


\bibitem{Sorensen} D. C. Sorensen, 1982, Newton’s method with a model trust-region modification, SIAM J. Numer. Anal., 19,
pp. 409-426.
	
	\bibitem{Tikhonov0} 	A. N. Tikhonov,   V. Y. Arsenin, Solutions of Ill-Posed Problems, Wiley, New York, 1977.
	
	
	\bibitem{Tikhonov1} A. N. Tikhonov, “ Regularization of Incorrectly Posed Problems,” Soviet Mathematics Doklady, Vol. 4, No. 6, 1963, pp. 1624-1627

\bibitem{Yang} M. Yang,  Y. Xia,  J. Wang, J. Peng, Efficiently solving total least squares with Tikhonov
identical regularization, Comput Optim Appl, Vol. 70,  571-592, (2018)
	\end{thebibliography}
\end{document}